\newtheorem{theorem}{Theorem}
\newtheorem{proposition}{Proposition}
\newtheorem{lemma}{Lemma}
\newtheorem{corollary}{Corollary}
\newenvironment{proof}[1][Proof]{\begin{trivlist}
		\item[\hskip \labelsep {\bfseries #1}]}{\end{trivlist}}
\newenvironment{definition}[1][Definition]{\begin{trivlist}
		\item[\hskip \labelsep {\bfseries #1}]}{\end{trivlist}}
\newenvironment{example}[1][Example]{\begin{trivlist}
		\item[\hskip \labelsep {\bfseries #1}]}{\end{trivlist}}
\newenvironment{remark}[1][Remark]{\begin{trivlist}
		\item[\hskip \labelsep {\bfseries #1}]}{\end{trivlist}}
\title{$\phi$-Thue-Morse sequences and infinite products}
\author{Shuo LI\footnote{shuo.li@imj-prg.fr}}
\date {08/06/2020}
\begin{document}
	
\maketitle

{\footnotesize
\begin{quotation}
{\noindent \bf Abstract.} In this article we introduce a new approach to compute infinite products defined by automatic sequences involving the Thue-Morse sequence.  As examples, for any positive integers $q$ and $r$ such that $0 \leq r \leq q-1$, we find infinitely many couples of rational functions $R(x)$ and $S(n)$ such that
$$\prod_{n=0}^{\infty}R(n)^{\frac{1+a_n}{2}}S(n)^{\frac{1-a_n}{2}}=2cos(\frac{2r+1}{2q}\pi),$$
where $(a_n)_{n \in \mathbf{N}}$ is the Thue-Morse sequence beginning with $a_0=1,a_1= -1$. 
\end{quotation}}

\section{Introduction}

Given an automatic sequence $(s_n)_{n \in \mathbf{N}}$ involving the sum of binary digits of the integers, it is interesting to find classes of rational functions $R$ such that the infinite product $\prod_{n\geq 0} R(n)^{s_n}$ has an expression in terms of known constants. To do so, there are several known approaches. In \cite{Allouche1985}\cite{ALLOUCHE1990}\cite{ALLOUCHE2000}, rational functions $R$ are obtained by computing special values on some particular functions. In\cite{Allouche1989}\cite{ALLOUCHE201595}\cite{HU2016589}\cite{Allouche2019}, authors use combinatorial methods inspired by \cite{Woods}\cite{Robbins}. In this article, we consider the reversal problem: for a given real number $a$, we try to find sequences of rational functions involving the Thue-Morse sequence $(R(n))_{n \in \mathbf{N}}$, such that $\prod_{n \geq 0}R(n)=a$. 

The motivation of this article is to detect the arithmatical nature of some well-known numbers involving the Thue-Morse sequence, for example, the Flajolet-Martin numbers, which is still an open question (see \cite{Allouche2019}). One of the Flajolet-Martin numbers is defined by
$$\prod_{n \geq 1}(\frac{2n}{2n+1})^{a_n},$$
where $(a_n)_{n \in \mathbf{N}}$ is the Thue-Morse sequence beginning with $a_0=1,a_1= -1$. The author states that, rather than calculate directly $\prod_{n \geq 1}(\frac{2n}{2n+1})^{a_n}$, one may work on the infinite product $\prod_{n \geq 1}(1+(\frac{a_n}{2n+1}))$.The last infinite product, inspired by \cite{DILCHER201843}, can be calculated as the limit of a sequence $\prod_{n\geq 1}(1+ (\frac{a^{(i)}_n}{2n+1}))$ such that $(a^{(i)}_n)_{n \in \mathbf{N}}$ is a sequence of periodic sequences converging to the Thue-Morse sequence. Furthermore, there is a natural relation beween these two products:

$$\prod_{n \geq 1}\left(\frac{2n}{2n+1}\right)^{a_n}\prod_{n\geq 1}\left(1+ (\frac{a_n}{2n+1})\right)=\prod_{a_n=1}\left(\frac{2n}{2n+1}\frac{2n+2}{2n+1}\right)\prod_{a_n=-1}\left(\frac{2n+1}{2n}\frac{2n}{2n+1}\right)$$
As a result, 
$$
  \begin{aligned}
        (\prod_{n \geq 1}\left(\frac{2n}{2n+1}\right)^{a_n}\prod_{n\geq 1}\left(1+ (\frac{a_n}{2n+1})\right))^2&=\prod_{a_n=1}\left(\frac{2n}{2n+1}\frac{2n+2}{2n+1}\right)^2\\
        &=\prod_{n=1}^{\infty}\left(\frac{2n}{2n+1}\frac{2n+2}{2n+1}\right)^{a_n}\prod_{n=1}^{\infty}\left(\frac{2n}{2n+1}\frac{2n+2}{2n+1}\right)\\
        &=\prod_{n \geq 1}\left(\frac{2n}{2n+1}\right)^{a_n}\frac{\sqrt{2}}{2}\prod_{n=1}^{\infty}\left(\frac{2n}{2n+1}\frac{2n+2}{2n+1}\right)\\
        &=\prod_{n \geq 1}\left(\frac{2n}{2n+1}\right)^{a_n}\frac{\Gamma(1)}{\Gamma(\frac{1}{2})^2}\frac{\sqrt{2}}{2}\\
        &=\prod_{n \geq 1}\left(\frac{2n}{2n+1}\right)^{a_n}\frac{\pi}{4}\frac{\sqrt{2}}{2}
    \end{aligned}
$$
The third equality is from the famous Wood-Robbins equality \cite{Woods}\cite{Robbins}.

Despite that this approach can still not touch the kernel of the problem, we find closed forms for other infinite products. As results, we prove that for given integers $q$ and $r$, such that $0 \leq r \leq q-1$, and for any integer $i$
$$
  \begin{aligned}
&\prod_{n \geq 0 }\left(\frac{\left(qn+r+\frac{3}{2}\frac{2r+1}{2q}+2qi+1\right)\left(q(n+1)-r-\frac{3}{2}\frac{2r+1}{2q}+2qi+1\right)}{\left(qn+r+1\right)\left(q(n+1)-r+1\right)}\right)^{\frac{a_n+1}{2}}\\
&\times\prod_{ n \geq 0 }\left(\frac{\left(qn+r+\frac{3}{2}\frac{2r+1}{2q}+2qi-\frac{1}{2}\right)\left(q(n+1)-r-\frac{3}{2}\frac{2r+1}{2q}+2qi-\frac{1}{2}\right)}{\left(qn+r-\frac{1}{2}\right)\left(q(n+1)-r-\frac{1}{2}\right)}\right)^{\frac{1-a_n}{2}}\\
&=2cos(\frac{2r+1}{2q}\pi)
    \end{aligned}
$$

\section{Notation}
Let $(a_n)_{n \in \mathbf{N}}$ be the Thue-Morse sequence satisfying $a_0=1$ and $a_1=-1$ and let $\mathbf{R}^*$ be the free monoid of $\mathbf{R}$ generated by concatenation.

\begin{definition}
Let $\phi: \left\{1,-1\right\} \to \mathbf{R}^*$ be a morphism and $q$ be an integer. $\phi$ is called a $q$-substitution if the length of $\phi(-1)$ and of $\phi(1)$ are the same, and both equal $q$. The Thue-Morse sequence $(a_n)_{n \in \mathbf{N}}$ can be defined as the fixed-point of a $2$-substitution $\phi: \phi(-1)=-1,1$ and $\phi(1)=1,-1$ and with the initial point $a_0=1$.
\end{definition}

\begin{definition}
A $q$-substitution $\phi$ is called alternative if it satisfies that $\phi(1)=l_0,l_1,l_2,...,l_{q-1}$ and $\phi(-1)=-l_0,-l_1,-l_2,...,-l_{q-1}$; and similarly, $\phi$ is called periodic if it satisfies that $\phi(1)=\phi(-1)=l_0,l_1,l_2,...,l_{q-1}$.
\end{definition}

\begin{definition}
Let $\phi$ be a $q$-substitution. Let us define the $\phi$-Thue-Morse sequence to be the image of the Thue-More sequence $(a_n)_{n \in \mathbf{N}}$ under the morphism $\phi$: $\phi((a_n)_{n \in \mathbf{N}})$, we denote this sequence by $(\phi(a)_n)_{n \in \mathbf{N}}$. 
\end{definition}

\begin{example}
Let $q$ be a positive integer, we define $T_q$ a $q$-substitution such that $T_q(1)=\smash{\underbrace{1,1,...,1}_{q\; \text{times}}}$ and $T_q(-1)=\smash{\underbrace{-1,-1,...,-1}_{q \; \text{times}}}$, then the sequence $(T_q(a)_n)_{n \in \mathbf{N}}$ should be in the form:
$$\smash{\underbrace{1,1,...,1}_{q \; \text{times}}}\smash{\underbrace{-1,-1,...,-1}_{q \; \text{times}}}\smash{\underbrace{-1,-1,...,-1}_{q \; \text{times}}}\smash{\underbrace{1,1,...,1}_{q \; \text{times}}}...$$
$$$$
We call this sequence a $q$-stuttered Thue-Morse sequence.
\end{example}

\begin{definition}
Let $\phi$ and $\psi$ be two $q$-substitutions $\left\{1,-1\right\} \to \mathbf{R}^q$ such that $\phi(1)=l_0,l_1,l_2,...,l_{q-1}$; $\phi(-1)=r_0,r_1,r_2,...,r_{q-1}$ and $\psi(1)=l'_0,l'_1,l'_2,...,l'_{q-1}$; $\psi(-1)=r'_0,r'_1,r'_2,...,r'_{q-1}$.Let $\lambda$ be a real number.\\
Let us define $\phi\shuffle\psi$ to be a $2q$-substitution $\left\{1,-1\right\} \to \mathbf{R}^{2q}$ such that $\phi\shuffle\psi(1)=l_0,l'_0,l_1,l'_1,l_2,l'_2...,l_{q-1},l'_{q-1}$and $\phi\shuffle\psi(-1)=r'_0,r_0,r'_1,r_1,r'_2,r_2,...,r'_{q-1},r_{q-1}$;\\
let us define $\phi+\psi$ to be a $q$-substitution $\left\{1,-1\right\} \to \mathbf{R}^{q}$ such that $(\phi+\psi)(1)=l_0+l'_0,l_1+l'_1l_2+l'_2...,l_{q-1}+l'_{q-1}$and $(\phi+\psi)(-1)=r_0+r'_0,r_1+r'_1,r_2+r'_2,...,r_{q-1}+r'_{q-1}$;\\
and let us define $\lambda\phi$ to be a $q$-substitution $\left\{1,-1\right\} \to \mathbf{R}^{q}$ such that $\lambda\phi(1)=\lambda l_0,\lambda l_1, \lambda l_2,...,\lambda l_{q-1}$and $\lambda\phi(-1)=\lambda r_0,\lambda r_1, \lambda r_2,...,\lambda r_{q-1}$;\\
 
\end{definition}

\section{ $\phi$-Thue-Morse sequences and their combined sequences}

Let $\phi$ be an alternative $q$-substitution, $\psi$ be a periodic $q$-substitution such that $\phi(1)=l_0,l_1,l_2,...,l_{q-1}$,$\psi(1)=s_0,s_1,s_2,...,s_{q-1}$ and $s_k+s_{q-1-k}=0$, $l_k=l_{q-1-k}$ for all $k$, $0\leq k \leq q-1$. Let $(T_k(a)_n)_{n \in \mathbf{n}}$ be the $k$-stuttered Thue-Morse sequence. In this section we consider two infinite products:
$$I1=\prod_{n=0}^{\infty}\left(1+\frac{(\phi+T_q)(a)_n}{2n+1}\right),$$
$$I2=\prod_{n=0}^{\infty}\left(1+\frac{(2\psi\shuffle\phi+T_{2q})(a)_n}{2n+1}\right).$$
We will compute the value of $I2/ I1$, if it is well defined. This value will be approached by a sequence of real numbers.

To do so, let us firstly verify that $I1$ and $I2$ are both well defined.

\begin{proposition}
For each alternative $q$-substitution $\phi: \left\{1,-1\right\} \to \mathbf{N}^q$, the following number is well defined:
$$I(\phi)=\prod_{n=0}^{\infty}\left(1+\frac{\phi(a)_n}{2n+1}\right);$$
\end{proposition}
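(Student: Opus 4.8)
The plan is to reduce the convergence of the infinite product to the convergence of the associated series $\sum_{n\geq 0}\frac{\phi(a)_n}{2n+1}$, and then to extract enough cancellation from the Thue--Morse sequence to offset the divergence of $\sum_n\frac{1}{2n+1}$. First I would write each index as $n=qm+k$ with $0\leq k\leq q-1$ and use that $\phi$ is alternative: since $\phi(1)=l_0,\dots,l_{q-1}$ and $\phi(-1)=-l_0,\dots,-l_{q-1}$, the $k$-th letter of the block $\phi(a_m)$ equals $a_m l_k$, so that
$$\phi(a)_{qm+k}=a_m l_k,\qquad 1+\frac{\phi(a)_{qm+k}}{2n+1}=1+\frac{a_m l_k}{2qm+2k+1}.$$
Because the letters $l_k$ lie in $\mathbf{N}$ they are bounded, so each factor is $1+O(1/n)$ and tends to $1$; in particular only finitely many factors can fail to be positive, and the fate of the product is governed by its tail.

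Next I would linearise. Writing $c_n=\frac{\phi(a)_n}{2n+1}=O(1/n)$, once $|c_n|<1$ we have $\log(1+c_n)=c_n+O(c_n^2)$, and $\sum_n c_n^2=O\!\left(\sum_n 1/n^2\right)$ converges absolutely. Hence $\sum_n\log(1+c_n)$ converges, and the product is well defined, precisely when the series $\sum_n c_n$ converges. The obstacle is that $\sum_n\frac{1}{2n+1}$ diverges, so absolute convergence is unavailable and one must exploit the sign pattern of $(a_m)$.

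The key arithmetic input is that the summatory function of the Thue--Morse sequence is bounded. Pairing $a_{2k}=a_k$ with $a_{2k+1}=-a_k$ gives $\sum_{m=0}^{2N-1}a_m=0$, and then $\sum_{m=0}^{2N}a_m=a_N$, so $\bigl|\sum_{m=0}^{M}a_m\bigr|\leq 1$ for every $M$. With this I would group $\sum_n c_n$ into blocks of length $q$, obtaining $\sum_n c_n=\sum_m a_m b_m$ with $b_m=\sum_{k=0}^{q-1}\frac{l_k}{2qm+2k+1}$. Since every $l_k\geq 0$, the sequence $b_m$ is positive, eventually decreasing, and tends to $0$; together with the bounded partial sums of $(a_m)$, Dirichlet's test (Abel summation) yields the convergence of $\sum_m a_m b_m$.

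Finally I would pass from the blocked series back to the original one: a partial sum $\sum_{n=0}^{qM+k'}c_n$ differs from $\sum_{m=0}^{M-1}a_m b_m$ only by an incomplete block whose total is $O(1/M)\to 0$, so convergence of the blocked series forces convergence of $\sum_n c_n$, whence $I(\phi)$ is well defined. The main difficulty is exactly this conditional convergence, and it is resolved by the boundedness of the Thue--Morse partial sums feeding Dirichlet's test; the blocking step and the logarithmic linearisation are then routine.
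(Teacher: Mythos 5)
Your proposal is correct, but it follows a genuinely different route from the paper's. The paper first factors the product over residue classes modulo $q$, writing $I(\phi)=\prod_{i=0}^{q-1}\prod_{n=0}^{\infty}\left(1+\frac{a_nl_i}{2(qn+i)+1}\right)$, and then, within each class, pairs the factors indexed $2n$ and $2n+1$: using $a_{2n}=a_n$ and $a_{2n+1}=-a_n$ it multiplies the two factors into $1+\frac{a_nkl-l^2}{(2kn+b)(2kn+b+k)}$, whose logarithm is $O(1/n^2)$, so the paired log-series converges \emph{absolutely} and no conditional-convergence argument is needed. You instead keep a single series, block it modulo $q$ as $\sum_m a_mb_m$ with $b_m=\sum_{k=0}^{q-1}\frac{l_k}{2qm+2k+1}$ monotone to zero (here is where you use $l_k\in\mathbf{N}$, which the paper's pairing does not need), and invoke Dirichlet's test with the bounded Thue--Morse partial sums $\bigl|\sum_{m\leq M}a_m\bigr|\leq 1$; note that your proof of that bound is exactly the same pairing $a_{2k}+a_{2k+1}=0$ that the paper applies directly to the factors, so the two arguments exploit the same recursion at different levels. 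What each buys: the paper's version reaches absolute convergence in one step and its residue-class factorization foreshadows the manipulations of $r_n$ and $r'_n$ later in the paper, while yours cleanly separates the analytic step (since $\sum_nc_n^2<\infty$, the product converges iff $\sum_nc_n$ does) from the arithmetic input, and is more general, working verbatim for any $\pm1$ sequence with bounded partial sums in place of Thue--Morse. Your explicit treatment of the incomplete final block and of the finitely many possibly non-positive factors is also slightly more careful than the paper's, which leaves both points implicit.
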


\begin{proof}
Let us consider the following infinite product $\prod_{n=0}^{\infty}\left(1+\frac{a_nl}{kn+b}\right)$. Because of the boundness of the sequence $(a_n)_{n \in \mathbf{N}}$, $1+\frac{a_nl}{kn+b}>0$ for all $n$ large enough, say $n>N_0$. So that
\begin{equation}
  \begin{aligned}
        \log\left(\prod_{n>N_0}^{\infty}\left(1+\frac{a_nl}{kn+b}\right)\right)&=\sum_{n>N_0}^{\infty}\log\left(1+\frac{a_nl}{kn+b}\right)\\
        &=\sum_{n>N_0}^{\infty}\log\left((1+\frac{a_{2n}l}{2kn+b})(1+\frac{a_{2n+1}l}{2kn+b+k})\right)\\
        &=\sum_{n>N_0}^{\infty}\log\left(1+\frac{a_{n}kl-l^2}{(2kn+b)(2kn+b+k)}\right),\\
    \end{aligned}
\end{equation}
and the last sum converges.
To conclude the proof, it is enough to state that 
$$I(\phi)=\prod_{i=0}^{q-1}\prod_{n=0}^{\infty}\left(1+\frac{a_nl_i}{2(qn+i)+1}\right).$$
\end{proof}

\begin{proposition}
Let $\phi$ be an alternative $q$-substitution $\phi: \left\{1,-1\right\} \to \mathbf{N}^q: \phi(1)=l_0,l_1,l_2,...,l_{q-1} \; \phi(-1)=-l_0,-l_1,-l_2,...,-l_{q-1}$ and $\psi$ be a periodic $q$-substitution $\psi: \left\{1,-1\right\} \to \mathbf{N}^q: \psi(1)=\psi(-1)=s_0,s_1,s_2,...,s_{q-1}$, such that $s_k+s_{q-1-k}=0$ for all $k$, $0 \leq k \leq q-1$, then the following number is well defined:
$$I(\psi\shuffle \phi+T_{2q})=\prod_{n=0}^{\infty}\left(1+\frac{(\psi\shuffle \phi+T_{2q})(a)_n}{2n+1}\right);$$
\end{proposition}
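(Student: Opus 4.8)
The plan is to follow the proof of Proposition 1 as closely as possible. Since the sequence $\big((\psi\shuffle\phi+T_{2q})(a)_n\big)_n$ takes only finitely many values it is bounded, so each factor $1+\frac{(\psi\shuffle\phi+T_{2q})(a)_n}{2n+1}$ is positive for $n$ large, and
\[
\log\!\left(1+\frac{(\psi\shuffle\phi+T_{2q})(a)_n}{2n+1}\right)=\frac{(\psi\shuffle\phi+T_{2q})(a)_n}{2n+1}+O\!\left(\tfrac1{n^2}\right).
\]
As $\sum_n O(1/n^2)$ converges absolutely, it suffices to prove that the series $\sum_{n}\frac{(\psi\shuffle\phi+T_{2q})(a)_n}{2n+1}$ converges; the tail product then converges to a finite nonzero limit and $I(\psi\shuffle\phi+T_{2q})$ is well defined.

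The first step is to unwind the definitions of $\shuffle$, $+$ and $T_{2q}$. Writing $n=2qm+j$ with $0\le j\le 2q-1$ and using $(\psi\shuffle\phi)(1)_{2t}=s_t$, $(\psi\shuffle\phi)(1)_{2t+1}=l_t$, $(\psi\shuffle\phi)(-1)_{2t}=-l_t$, $(\psi\shuffle\phi)(-1)_{2t+1}=s_t$, one checks that for each fixed residue $j$ the value $(\psi\shuffle\phi+T_{2q})(a)_{2qm+j}$ is an affine function $\alpha_j+\beta_j a_m$ of $a_m$, with $\alpha_{2t}=\frac{s_t-l_t}{2}$, $\alpha_{2t+1}=\frac{s_t+l_t}{2}$ and $\beta_{2t}=\frac{s_t+l_t}{2}+1$, $\beta_{2t+1}=\frac{l_t-s_t}{2}+1$. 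Equivalently, $\psi\shuffle\phi+T_{2q}$ decomposes as a sum of an alternative $2q$-substitution $\Phi+T_{2q}$, with $(\Phi+T_{2q})(1)_j=\beta_j$, and a periodic $2q$-substitution $\Psi$, with $\Psi(1)_j=\alpha_j$. Accordingly I would split
\[
\sum_{n}\frac{(\psi\shuffle\phi+T_{2q})(a)_n}{2n+1}=\sum_{n}\frac{\beta_{j(n)}\,a_{m(n)}}{2n+1}+\sum_{n}\frac{\alpha_{j(n)}}{2n+1}.
\]

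For the first (alternative) sum, note that $\beta_{j(n)}a_{m(n)}=(\Phi+T_{2q})(a)_n$, so this is precisely the series attached to an alternative $2q$-substitution, and it converges by the very pairing argument of Proposition 1: grouping the indices $m$ into pairs $2m,2m+1$ and using $a_{2m}=a_m$, $a_{2m+1}=-a_m$ turns each residue-class series into one with general term $O(1/m^2)$. (That argument uses only the boundedness of $(a_n)_n$, not the integrality of the values of the substitution, so it applies even though the $\beta_j$ need not be integers.) The second sum no longer involves the Thue-Morse sequence: its coefficients $\alpha_{j(n)}$ are periodic in $n$ of period $2q$, and I would group the terms into consecutive blocks of length $2q$. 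The block starting at $n=2qm$ equals $\sum_{j=0}^{2q-1}\frac{\alpha_j}{2(2qm+j)+1}$, and since
\[
\sum_{j=0}^{2q-1}\alpha_j=\sum_{t=0}^{q-1}\left(\frac{s_t-l_t}{2}+\frac{s_t+l_t}{2}\right)=\sum_{t=0}^{q-1}s_t=0
\]
by the hypothesis $s_k+s_{q-1-k}=0$, the leading $1/m$ contribution of each block cancels and the block is $O(1/m^2)$; hence this series converges too.

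The crux is the periodic sum, and the key point is that it converges only after the global cancellation over a full period. Indeed, taken one residue class at a time the subproduct $\prod_m\big(1+\frac{\alpha_j+\beta_j a_m}{2(2qm+j)+1}\big)$ diverges, since its $\alpha_j$-part behaves like a harmonic product, so no term-by-term argument can work; it is exactly the identity $\sum_{j}\alpha_j=\sum_t s_t=0$ that saves the day, the contributions of the $l_t$ cancelling between the even and odd positions of the period and those of the $s_t$ cancelling in the pairs $s_t+s_{q-1-t}$. The two remaining technical points I would check carefully are that regrouping the conditionally convergent series into blocks of length $2q$ is legitimate (it is, since the individual terms are $O(1/n)$ and the incomplete final block contributes $O(1/m)\to 0$) and that the finitely many initial factors, excluded from the logarithmic estimate, are nonzero, so that the full product is well defined.
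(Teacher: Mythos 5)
Your proof is correct, but it takes a genuinely different route from the paper's. The paper never separates the sequence: it works directly on the logarithm of the product, grouping the factors into super-blocks of length $4q$ corresponding to the pair of Thue--Morse letters $a_{2j}=a_j$, $a_{2j+1}=-a_j$, and exhibits the cancellations among the four families of terms by hand --- the $l_i$-contributions (together with the $T_{2q}$ part) cancel between the two half-blocks because the second half-block carries the opposite letter, while the $s_i$-contributions are controlled by pairing the index $i$ with $q-1-i$ via $s_{q-1-i}=-s_i$ --- so that each grouped package of logarithms is $O(1/j^2)$. You instead linearize first, writing $(\psi\shuffle\phi+T_{2q})(a)_{2qm+j}=\alpha_j+\beta_j a_m$ (your formulas for $\alpha_j,\beta_j$ check out against the definition of $\psi\shuffle\phi$), which splits the sequence into the image of an alternative $2q$-substitution, handled exactly by the residue-class pairing of Proposition 1 using $a_{2m}=a_m$, $a_{2m+1}=-a_m$, plus a Thue--Morse-free periodic sequence whose period sums to $\sum_t s_t=0$, whose blocks are then $O(1/m^2)$. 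Both proofs ultimately exploit the same two sources of cancellation, but yours is more modular: it reduces Proposition 2 to Proposition 1 plus the standard fact that a zero-mean periodic sequence gives a convergent series against $1/(2n+1)$, and as a bonus it establishes the proposition under the weaker hypothesis $\sum_k s_k=0$, since full antisymmetry $s_k=-s_{q-1-k}$ enters your argument only through that identity (the paper's pairing $i\leftrightarrow q-1-i$ uses antisymmetry itself, though that stronger hypothesis is in any case needed later for Theorem 1). The paper's route, in exchange, stays inside a single chain of estimates and rehearses the block-pairing manipulations reused in Lemmas 1--3. Your two explicit points of care --- that regrouping into blocks is legitimate because individual terms are $O(1/n)$, and that only finitely many initial factors escape the logarithmic estimate --- are sound and are treated only implicitly in the paper.
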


\begin{proof}
If $1+\frac{(\psi\shuffle \phi+T_{2q})(a)_n}{2n+1} > 0$ for all integer $n \geq N_0q$, then
\begin{equation}
  \begin{aligned}
&\log\left(\prod_{n=N_0q}^{\infty}\left(1+\frac{(\psi\shuffle \phi+T_{2q})(a)_n}{2n+1}\right)\right)\\
&=\sum_{N_0 \leq j \leq \infty}\sum_{0\leq i \leq q-1}\left(\log(1+\frac{s_i+a_j}{4jq+4i+1}+\log(1+\frac{l_i+a_j}{4jq+4i+3}))\right)\\
&+\sum_{N_0 \leq j \leq \infty}\sum_{0\leq i \leq q-1}\left(\log(1+\frac{-l_i-a_j}{4jq+2q+4i+1})+\log(1+\frac{s_i-a_j}{4jq+2q+4i+3})\right)\\
    \end{aligned}
\end{equation}
Now let us prove that the above infinite sums converge. Firstly, when $j$ is large,
$$\log(1+\frac{l_i+a_j}{4jq+4i+1})+\log(1+\frac{-l_i-a_j}{4jq+2q+4i+1})=O(\frac{l_i+a_j}{4jq+4i+1}-\frac{l_i+a_j}{4jq+2q+4i+1})=O(\frac{1}{n^2}),$$
so that $$\sum_{N_0 \leq j \leq \infty}\sum_{0\leq i \leq q-1}\left(\log(1+\frac{l_i+a_j}{4jq+4i+1})+\log(1+\frac{-l_i-a_j}{4jq+2q+4i+1})\right)$$ converges.\\
Secondly, when $j$ is large,
\begin{equation}
  \begin{aligned}
&\log(1+\frac{s_i+a_j}{4jq+4i+1})+\log(1+\frac{s_{q-1-i}+a_j}{4jq+4(q-1-i)+1})\\
&+\log(1+\frac{s_i-a_j}{4jq+2q+4i+3})+\log(1+\frac{s_{q-1-i}-a_j}{4jq+2q+4(q-1-i)+3})\\
&=O(\frac{s_i+a_j}{4jq+4i+1}+\frac{-s_{i}+a_j}{4jq+4(q-1-i)+1}+\frac{s_i-a_j}{4jq+2q+4i+3}+\frac{-s_{i}-a_j}{4jq+2q+4(q-1-i)+3})\\
&=O(\frac{1}{n^2})
    \end{aligned}
\end{equation}
so that
\begin{equation}
  \begin{aligned}
&2\sum_{N_0 \leq j \leq \infty}\sum_{0\leq i \leq q-1}\left(\log(1+\frac{s_i+a_j}{4jq+4i+1})+\log(1+\frac{s_i-a_j}{4jq+2q+4i+1})\right)\\
&=\sum_{N_0 \leq j \leq \infty}\sum_{0\leq i \leq q-1}\left(\log(1+\frac{s_i+a_j}{4jq+4i+1})+\log(1+\frac{s_{q-1-i}+a_j}{4jq+4(q-1-i)+1}))\right)\\
&+\sum_{N_0 \leq j \leq \infty}\sum_{0\leq i \leq q-1}\left(\log(1+\frac{s_i-a_j}{4jq+2q+4i+3})+\log(1+\frac{s_{q-1-i}-a_j}{4jq+2q+4(q-1-i)+3}))\right)\\
&=\sum_{N_0 \leq j \leq \infty}\sum_{0\leq i \leq q-1}O(\frac{1}{n^2})
    \end{aligned}
\end{equation}
converges;

To conclude, infinite product $I(\psi\shuffle \phi+T_{2q})$ is well defined.
\end{proof}

Now, let us firstly define two sequences of real numbers $(r_n)_{n \in \mathbf{N_+}}$ and $(r'_n)_{n \in \mathbf{N_+}}$.
For each $n \geq 1$:
$$r_n=\prod_{k=0}^{\infty}\prod_{j=0}^{q2^n-1}(1+\frac{(\phi+T_q)(a)_j}{q2^{n+1}k+2j+1});$$
$$r'_n=\prod_{k=0}^{\infty}\prod_{j=0}^{q2^{n+1}-1}(1+\frac{(2\psi\shuffle\phi+T_{2q})(a)_j}{q2^{n+2}k+2j+1}).$$

These sequences are both well defined and we can easily check that $\lim_{n \to \infty} r_n=I1$ and $\lim_{n \to \infty} r'_n=I2$. The first convergence is from the fact that, for each $n$, the $q2^n$ terms with $k=0$ in $r_n$ coincide with the first $q2^n$ terms in the infinite product of $I1$, and $\prod_{k=1}^{\infty}\prod_{j=0}^{q2^n-1}(1+\frac{(\phi(a)+T_q(a))_j}{q2^{n+1}k+2j+1})$ converge to $1$ when $n \to \infty$. And a similar argument works to prove that $\lim_{n \to \infty} r'_n=I2$.

Here we make a first simplification on the expressions of $r_n$ and $r'_n$.

\begin{lemma}
Let $\phi$ be an alternative $q$-substitution, $\psi$ be a periodic $q$-substitution and $(T_k(a)_n)_{n \in \mathbf{n}}$ be the $k$-stuttered Thue-Morse sequence such that $\phi(1)=l_0,l_1,l_2,...,l_{q-1}$, $\phi(-1)=-l_0,-l_1,-l_2,...,-l_{q-1}$ and $\psi(1)=\psi(-1)=s_0,s_1,s_2,...,s_{q-1}$ under the consdition $s_k+s_{q-1-k}=0$, $l_k=l_{q-1-k}$ for all $k$, $0\leq k \leq q-1$, then, with all notation defined as above,

$$r_{2n+1}=\frac{1}{\sqrt{2}}\prod_{j=0}^{q2^{2n}-1}2\sin(\pi (\frac{2j+1}{q2^{2n+2}}+((\phi+T_q)(a)_j)\frac{1}{q2^{2n+2}}));$$
$$r'_{2n+2}=\frac{1}{\sqrt{2}}\prod_{j=0}^{q2^{2n+1}-1}2\sin(\pi (\frac{2j+1}{q2^{2n+3}}+((2\psi\shuffle\phi+T_{2q})(a)_j)\frac{1}{q2^{2n+3}})).$$

\end{lemma}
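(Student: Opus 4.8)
\emph{Proof idea.} The plan is to turn each finite‑period product over $k$ into a ratio of sines by means of the reflection formula $\Gamma(z)\Gamma(1-z)=\pi/\sin\pi z$, and then to read off the constant $\tfrac1{\sqrt2}$ from a classical product of equally spaced sines. First I would interchange the order of the two products and factor the period out of every denominator. For $r_{2n+1}$ the coefficient of $k$ is $2N=q2^{2n+2}$ with $N=q2^{2n+1}$, and for $r'_{2n+2}$ it is $2N'=q2^{2n+4}$ with $N'=q2^{2n+3}$; in both cases $j$ runs over one full period. Each factor then becomes $1+\frac{c}{2Nk+2j+1}=\frac{k+\alpha_j}{k+\beta_j}$, where $\beta_j=\frac{2j+1}{2N}$ (resp. $\frac{2j+1}{2N'}$) and $\alpha_j$ exceeds $\beta_j$ by the relevant $\phi$‑Thue‑Morse value over the period, so each of $r_{2n+1},r'_{2n+2}$ takes the form $\prod_j\prod_{k\ge0}\frac{k+\alpha_j}{k+\beta_j}$. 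The single tool I would use is $\prod_{k\ge0}\frac{(k+a)(k+1-a)}{(k+b)(k+1-b)}=\frac{\sin\pi a}{\sin\pi b}$, an immediate consequence of the reflection formula; note that the denominators are automatically reflection symmetric because $\beta_{N-1-j}=1-\beta_j$. The convergence needed to reorder and to group the factors is supplied by the two preceding propositions.

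For $r_{2n+1}$ I would pair $j$ with $N-1-j$. Writing $j=qm+i$ gives $N-1-j=q(2^{2n+1}-1-m)+(q-1-i)$, and the numerator symmetry $\alpha_{N-1-j}=1-\alpha_j$ is equivalent to $c_j+c_{N-1-j}=0$. This is exactly where the hypotheses enter: the Thue‑Morse block relation $a_{2^{M}-1-m}=(-1)^{M}a_m$ (checked from $a_n=(-1)^{s_2(n)}$ and bitwise complementation) yields the sign $-a_m$ for the \emph{odd} index $M=2n+1$, and together with $l_i=l_{q-1-i}$ it forces $(\phi+T_q)(a)_{N-1-j}=-(\phi+T_q)(a)_j$. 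Collapsing each pair by the reflection identity gives $r_{2n+1}=\prod_{j=0}^{N/2-1}\frac{\sin\pi\alpha_j}{\sin\pi\beta_j}$, and the classical evaluation $\prod_{j=0}^{N/2-1}\sin\frac{(2j+1)\pi}{2N}=2^{-(N-1)/2}$ turns the denominator into the single factor $2^{(N-1)/2}$. Distributing it produces precisely the $\tfrac1{\sqrt2}$ together with the $2\sin$ normalisation over the range $j=0,\dots,q2^{2n}-1$, which is the first claim.

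The case $r'_{2n+2}$ is the main obstacle, and it behaves genuinely differently. Here $M=2n+2$ is \emph{even}, so the block relation now gives $a_{2^{M}-1-m}=+a_m$, the opposite sign; moreover, writing $\Phi=2\psi\shuffle\phi+T_{2q}$, the substitution interleaves the alternating data $(l_i)$ with the periodic data $(s_i)$, and the shuffle exchanges their slots between the images of $1$ and of $-1$. Because of this the naive pairing fails: one checks directly that the multiset $\{\alpha_j\}$ is no longer closed under $x\mapsto1-x$, so the factors cannot be collapsed two at a time (indeed a single pair would not even converge). The resolution, which is the delicate heart of the argument, is to combine the factors indexed by $(m,i)$ and by $(2^{M}-1-m,q-1-i)$ in all of their even and odd slots simultaneously and to invoke the three symmetries together—the even‑$M$ block relation, $l_i=l_{q-1-i}$, and above all $s_i+s_{q-1-i}=0$, the last being exactly what annihilates the periodic contribution of $\psi$.

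Finally I would track the angle. The target argument $\pi\frac{2j+1+\Phi(a)_j}{q2^{2n+3}}$ equals $2\pi\alpha_j$, since the period here is $2N'=q2^{2n+4}$ whereas the displayed denominator is $N'=q2^{2n+3}$; producing this doubled angle, and the accompanying further halving of the product range to $j=0,\dots,q2^{2n+1}-1$, is precisely what the four‑fold regrouping above must accomplish. I expect the exact cancellation of the $s_i$‑terms across the even and odd slots of each group to be the step requiring the most care, as it is what both closes the relevant multiset under reflection and forces the $2\pi\alpha_j$. Once the product has been brought to the form $\prod2\sin(2\pi\alpha_j)$, the same equally spaced sine identity again extracts the $\tfrac1{\sqrt2}$ and concludes.
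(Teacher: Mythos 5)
Your treatment of $r_{2n+1}$ is correct and, in substance, the paper's own argument in different packaging: the paper passes to the Gamma-quotient form $r_{2n+1}=\prod_j\Gamma(\beta_j)/\Gamma(\alpha_j)$ via Lemma 1 of Dilcher--Ericksen (using $\sum_j c_j=0$), evaluates $\prod_j\Gamma(\beta_j)$ by the Gauss multiplication formula, and halves the range with $\Gamma(z)\Gamma(1-z)=\pi/\sin(\pi z)$, invoking exactly your antisymmetry $(\phi+T_q)(a)_{N-1-j}=-(\phi+T_q)(a)_j$ (from $a_{2^{2n+1}-1-m}=-a_m$ and $l_k=l_{q-1-k}$); your pairwise product identity plus the equally spaced sine product $\prod_{j=0}^{N/2-1}2\sin\frac{(2j+1)\pi}{2N}=\sqrt2$ is an equivalent route to the same two inputs.

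The second half, however, has a genuine gap, and it sits precisely where you placed the ``delicate heart'' of your argument. You read the subscript $2n+2$ literally against the displayed definition of $r'_n$, inferred an even block exponent, concluded that the multiset $\{\alpha_j\}$ is not closed under $x\mapsto 1-x$, and then proposed a four-fold regrouping producing doubled angles $2\pi\alpha_j$ --- which you never carry out: you explicitly only ``expect'' the cancellation of the $s_i$-terms to work, and no identity converting such four-fold groups into sines is exhibited. That sketch is not a proof, and it is also not what is true. Look at the formula to be proved: it has modulus $q2^{2n+3}$ and range $q2^{2n+1}$, so the underlying period is $L=q2^{2n+2}=2q\cdot 2^{2n+1}$, the image of the first $2^{2n+1}$ Thue--Morse letters --- an \emph{odd} exponent (there is an off-by-one discrepancy between the displayed definition of $r'_n$ and the lemma; the reading consistent with the displayed formula, and the one the paper uses, is the odd one). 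With $M=2n+1$ odd one has $a_{2^M-1-m}=-a_m$, and the combined sequence is genuinely antisymmetric over the period: pairing $j=2qm+2i$ with $L-1-j=2q(2^M-1-m)+2(q-1-i)+1$, the \emph{swapped interleaving} built into the definition of $\shuffle$ (the slots of $\psi$ and $\phi$ are exchanged between the images of $1$ and $-1$) sends $s_i\mapsto s_{q-1-i}=-s_i$ and $l_i\mapsto -l_{q-1-i}=-l_i$, while $T_{2q}$ flips sign; hence $(2\psi\shuffle\phi+T_{2q})(a)_{L-1-j}=-(2\psi\shuffle\phi+T_{2q})(a)_j$, which is exactly the fact the paper invokes. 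Given this, the second identity follows by the \emph{same} reflection-and-halving argument as the first, with angle $\pi\alpha_j=\pi\frac{2j+1+c_j}{q2^{2n+3}}$ --- no doubled angle and no four-fold grouping. Your diagnosis that the $r'$ case ``behaves genuinely differently'' is the point of failure; establishing the antisymmetry above is the missing, and only needed, ingredient.
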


\begin{proof}
From Lemma 1 in \cite{DILCHER201843}, for each integer $n \geq 2$, taking $f(j)=-\phi(a)_j-T_q(a)_j$, $a=\frac{1}{q2^{n+1}}$ and $z_j=\frac{2j+1}{q2^{n+1}}$, we have $\sum_{j=0}^{q2^n-1}f(j)=0$, so that

\begin{equation}
r_n=\prod_{j=0}^{q2^n-1}\frac{\Gamma(\frac{2j+1}{q2^{n+1}})}{\Gamma(\frac{2j+1}{q2^{n+1}}+((\phi+T_q)(a)_j)\frac{1}{q2^{n+1}})}.
\end{equation}
Lemma 2 in \cite{DILCHER201843} gives us 
\begin{equation}
\prod_{j=0}^{q2^n-1}\Gamma(\frac{2j+1}{q2^{n+1}})=\frac{\prod_{j=0}^{2^{n+1}-1}\Gamma(\frac{j}{q2^{n+1}})}{\prod_{j=0}^{2^n-1}\Gamma(\frac{j}{q2^{n}})}=\frac{\frac{(2\pi)^{(q2^{n+1}-1)/2}}{\sqrt{q2^{n+1}}}}{\frac{(2\pi)^{(q2^{n}-1)/2}}{\sqrt{q2^{n}}}}=\frac{1}{\sqrt{2}}(2\pi)^{q2^{n-1}}.
\end{equation}
From the facts that $\forall 0 \leq j\leq q2^{2n+1}-1, \; \phi(a)_j=-\phi(a)_{q2^{2n+1}-1-j}$, $T_q(a)_j=-T_q(a)_{q2^{2n+1}-1-j}$ and $\Gamma(z)\Gamma(1-z)=\frac{\pi}{\sin(\pi z)}$, we have
\begin{equation}
\prod_{j=0}^{q2^{2n+1}-1}\Gamma(\frac{2j+1}{q2^{2n+2}}+((\phi+T_q)(a)_j)\frac{1}{q2^{2n+2}})=\prod_{j=0}^{q2^{2n}-1}\frac{\pi}{\sin(\pi (\frac{2j+1}{q2^{2n+2}}+((\phi+T_q)(a)_j)\frac{1}{q2^{2n+2}}))}.
\end{equation}

As a result, \begin{equation}
r_{2n+1}=\frac{1}{\sqrt{2}}\prod_{j=0}^{q2^{2n}-1}2\sin(\pi (\frac{2j+1}{q2^{2n+2}}+((\phi+T_q)(a)_j)\frac{1}{q2^{2n+2}})).
\end{equation}

Similarly, from Lemma 1 in \cite{DILCHER201843}, for each integer $n \geq 2$, taking $f(j)=-2\psi\shuffle\phi(a)_j-T_{2q}(a)_j$, $a=\frac{1}{q2^{n+2}}$ and $z_j=\frac{2j+1}{q2^{n+2}}$, we have $\sum_{j=0}^{q2^{n+2}-1}f(j)=0,$ so that

\begin{equation}
r'_n=\prod_{j=0}^{q2^{n+1}-1}\frac{\Gamma(\frac{2j+1}{q2^{n+2}})}{\Gamma(\frac{2j+1}{q2^{n+2}}+((2\psi\shuffle\phi+T_{2q})(a)_j)\frac{1}{q2^{n+2}})}.
\end{equation}
By the same argument as above,
\begin{equation}
\prod_{j=0}^{q2^{n+1}-1}\Gamma(\frac{2j+1}{q2^{n+2}})=\frac{1}{\sqrt{2}}(2\pi)^{q2^{n}}.
\end{equation}
From the fact that $\forall 0 \leq j\leq q2^{2n+2}-1, \; \psi\shuffle\phi(a)_j=-\psi\shuffle\phi(a)_{q2^{2n+2}-1-j}$, $T_{2q}(a)_j=-T_{2q}(a)_{q2^{2n+2}-1-j}$, we have
\begin{equation}
\prod_{j=0}^{q2^{2n+2}-1}\Gamma(\frac{2j+1}{q2^{2n+3}}+((2\psi\shuffle\phi+T_{2q})(a)_j)\frac{1}{q2^{2n+3}})=\prod_{j=0}^{q2^{2n+1}-1}\frac{\pi}{\sin(\pi (\frac{2j+1}{q2^{2n+3}}+((2\psi\shuffle\phi+T_{2q})(a)_j)\frac{1}{q2^{2n+3}}))}.
\end{equation}
As a result,
\begin{equation}
r'_{2n+2}=\frac{1}{\sqrt{2}}\prod_{j=0}^{q2^{2n+1}-1}2\sin(\pi (\frac{2j+1}{q2^{2n+3}}+((2\psi\shuffle\phi+T_{2q})(a)_j)\frac{1}{q2^{2n+3}})).
\end{equation}

\end{proof}

\begin{lemma}
Let $(\phi(a)_n)_{n \in \mathbf{N}}$ be a sequence defined by an alternative morphism as above, such that $\phi(1)=l_0,l_1,l_2,...,l_{q-1}$ and $l_k=l_{q-1-k}$ for all $k$, $0\leq k \leq k-1$. Then 
\begin{equation}
r_{2n+1}=\frac{1}{\sqrt{2}}\prod_{0 \leq j \leq 2^{2n}-1 \atop a_{j}=1}\prod_{0 \leq k \leq q}2\cos (\frac{(qj+k-l_{q-1-k}/2)\pi}{q2^{2n+1}})\prod_{0 \leq j \leq 2^{2n}-1 \atop a_{j}=-1}\prod_{0 \leq k \leq q}2\sin (\frac{(qj+k-l_k/2)\pi}{q2^{2n+1}})
\end{equation}
.
\end{lemma}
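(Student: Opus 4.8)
The plan is to start from the sine product for $r_{2n+1}$ furnished by the previous lemma,
\[
r_{2n+1}=\frac{1}{\sqrt{2}}\prod_{j=0}^{q2^{2n}-1}2\sin\left(\pi\left(\frac{2j+1}{q2^{2n+2}}+\left((\phi+T_q)(a)_j\right)\frac{1}{q2^{2n+2}}\right)\right),
\]
and to rewrite this single product as a double product indexed by a Thue--Morse coordinate and a block coordinate. Writing $j=qm+k$ with $0\le m\le 2^{2n}-1$ and $0\le k\le q-1$, and using that $\phi+T_q$ is a $q$-substitution with $(\phi+T_q)(1)=l_0+1,\dots,l_{q-1}+1$ and $(\phi+T_q)(-1)=-(l_0+1),\dots,-(l_{q-1}+1)$, I would record the identity $(\phi+T_q)(a)_{qm+k}=a_m(l_k+1)$. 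Substituting this and splitting according to the sign of $a_m$ then isolates the two halves of the claimed formula.

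For the blocks with $a_m=-1$ the argument collapses to $\tfrac{(qm+k-l_k/2)\pi}{q2^{2n+1}}$, which already yields the stated sine factors verbatim. For the blocks with $a_m=1$ the argument collapses instead to $\tfrac{(qm+k+l_k/2+1)\pi}{q2^{2n+1}}$, and here I would convert sine into cosine through $\sin\theta=\cos(\tfrac{\pi}{2}-\theta)$ combined with the identity $\tfrac{\pi}{2}=\tfrac{q2^{2n}\pi}{q2^{2n+1}}$, obtaining the factor $\cos\big(\tfrac{(q2^{2n}-qm-k-1-l_k/2)\pi}{q2^{2n+1}}\big)$. The core of the proof is then a reindexing of this $a_m=1$ part: I would apply the involution $(m,k)\mapsto(2^{2n}-1-m,\,q-1-k)$, which permutes the pairs with $0\le m\le 2^{2n}-1$ and $0\le k\le q-1$. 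A direct substitution shows the reindexed factor becomes $\cos\big(\tfrac{(qm+k-l_{q-1-k}/2)\pi}{q2^{2n+1}}\big)$, matching the cosine factors in the statement; the two inputs that make this land correctly are the hypothesis $l_k=l_{q-1-k}$ and the Thue--Morse reflection $a_{2^{2n}-1-m}=a_m$, the latter guaranteeing that the involution maps the set $\{a_m=1\}$ to itself so the reindexed product is the original product merely reordered.

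The main obstacle is precisely this reflection symmetry $a_{2^{2n}-1-m}=a_m$ and its compatibility with the block reflection $k\mapsto q-1-k$. Writing $a_m=(-1)^{s_2(m)}$ with $s_2$ the binary digit sum, the complement of $m$ inside a $2n$-bit window has digit sum $2n-s_2(m)$, so $a_{2^{2n}-1-m}=(-1)^{2n-s_2(m)}=a_m$ because the number of bits $2n$ is even. This evenness is exactly what the odd subscript $2n+1$ of $r_{2n+1}$ provides, and it is the single place where the parity of the index is essential; the analogous step for $r'_{2n+2}$ will rest on the same even-length complementation. Once the reflection and the two coordinate bijections are in place, the remaining steps are routine trigonometric rewriting and index bookkeeping, and the contributions from $a_m=1$ and $a_m=-1$ reassemble into the asserted product.
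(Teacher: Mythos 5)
Your proposal is correct and follows essentially the same route as the paper: both start from the sine product of Lemma 1, use $(\phi+T_q)(a)_{qm+k}=a_m(l_k+1)$, convert the $a_m=1$ factors to cosines via the complement to $\tfrac{\pi}{2}$, and reindex through the involution $(m,k)\mapsto(2^{2n}-1-m,\,q-1-k)$ together with the reflection $a_{2^{2n}-1-m}=a_m$. Your explicit digit-sum justification of that reflection (and of why the even bit-length $2n$, hence the odd subscript $2n+1$, is essential) is a small welcome addition that the paper merely asserts.
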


\begin{proof}
From the fact that , $a_j=a_{2^{2n}-j-1}$, we can prove that for all $k$, $0 \leq k \leq q-1$,  
$$\phi(a)_{qj+k}=\phi(a)_{q(2^{2n}-j-1)+k}=a_jl_k;$$
$$T_q(a)_{qj+k}=T_q(a)_{q(2^{2n}-j-1)+k}=a_j.$$
As a result, if $a_j=1$ then  for all $k$ such that $0 \leq k \leq q-1$,
\begin{equation}
  \begin{aligned}
\sin(\pi (\frac{2(qj+k)+1}{q2^{2n+2}}+\frac{(\phi+T_q)(a)_{qj+k}}{q2^{2n+2}})&= \sin(\frac{(2(qj+k)+1+l_k+1)\pi}{q2^{2n+2}})\\
&=\sin(\frac{(qj+k+1+l_k/2)\pi}{q2^{2n+1}})\\
&=\cos(\frac{(q2^{2n}-qj-k-1-l_{k}/2)\pi}{q2^{2n+1}})\\
&=\cos(\frac{(q(2^{2n}-j-1)+q-1-k-l_{k}/2)\pi}{q2^{2n+1}})
    \end{aligned}
\end{equation}
 and
\begin{equation}
  \begin{aligned}
\sin(\pi (\frac{2(q(2^{2n}-j-1)+k)+1}{q2^{2n+2}}+\frac{(\phi(a)+T_q)(a)_{q(2^{2n}-j-1)+k}}{q2^{2n+2}})&=\sin(\frac{(2(q(2^{2n}-j-1)+k)+1+l_k+1)\pi}{q2^{2n+2}})\\
&=\sin(\frac{(q(2^{2n}-j-1)+k+1+l_k/2)\pi}{q2^{2n+1}})\\
&=\cos(\frac{(qj+q-k-1-l_{k}/2)\pi}{q2^{2n+1}})
    \end{aligned}
\end{equation}
And if $a_j=-1$ then  for all $k$ such that $0 \leq k \leq q-1$,
$$\phi(a)_{qj+k}=-l_k;$$
$$T_q(a)_{qj+k}=-1.$$
\begin{equation}
  \begin{aligned}
\sin(\pi (\frac{2(qj+k)+1}{q2^{2n+2}}+\frac{(\phi+T_q)(a)_{qj+k}}{q2^{2n+2}})&= \sin(\frac{(2(qj+k)+1-l_k-1)\pi}{q2^{2n+2}})\\
&=\sin(\frac{(qj+k-l_k/2)\pi}{q2^{2n+1}})
    \end{aligned}
\end{equation}
So $$r_{2n+1}=\frac{1}{\sqrt{2}}\prod_{0 \leq j \leq 2^{2n}-1 \atop a_{j}=1}\prod_{0 \leq k \leq q}2\cos (\frac{(qj+k-l_{q-1-k}/2)\pi}{q2^{2n+1}})\prod_{0 \leq j \leq 2^{2n}-1 \atop a_{j}=-1}\prod_{0 \leq k \leq q}2\sin (\frac{(qj+k-l_k/2)\pi}{q2^{2n+1}}).$$
\end{proof}

In an analogous way, we reformulate the expression of $r'_{2n+2}$ in the same form.

\begin{lemma}
Let $((2\psi\shuffle\phi+T_{2q})(a)_n)_{n \in \mathbf{N}}$ be a sequence defined as above, such that $\phi(1)=l_0,l_1,l_2,...,l_{q-1}$,$\psi(1)=s_0,s_1,s_2,...,s_{q-1}$ and $s_k+s_{q-1-k}=0$, $l_k=l_{q-1-k}$ for all $k$, $0\leq k \leq q-1$.Then 
\begin{equation}
  \begin{aligned}
r'_{2n+2}&=\frac{1}{\sqrt{2}}\prod_{0 \leq j \leq 2^{2n}-1 \atop a_{j}=1}\prod_{0 \leq k \leq q}2\cos (\frac{(qj+k-l_{q-1-k}/2)\pi}{q2^{2n+1}})\prod_{0 \leq j \leq 2^{2n}-1 \atop a_{j}=-1}\prod_{0 \leq k \leq q}2\sin (\frac{(qj+k-l_k/2)\pi}{q2^{2n+1}})\\
&\times \prod_{0 \leq j \leq 2^{2n}-1 \atop a_{j}=1}\prod_{0 \leq k \leq q}2\cos (\frac{(2(qj+k)+1-s_{k})\pi}{q2^{2n+2}})\prod_{0 \leq j \leq 2^{2n}-1 \atop a_{j}=-1}\prod_{0 \leq k \leq q}2\sin (\frac{(2(qj+k)+1-s_k)\pi}{q2^{2n+2}})
    \end{aligned}
\end{equation}
\end{lemma}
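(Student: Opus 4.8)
The plan is to mirror the proof of Lemma 2, but starting from the sine product for $r'_{2n+2}$ furnished by Lemma 1. First I would write every index in the form $j=2qm+p$ with $0\le m\le 2^{2n}-1$ and $0\le p\le 2q-1$, so that the $2q$-substitution $2\psi\shuffle\phi+T_{2q}$ acts blockwise: the block indexed by $m$ is the image of the single letter $a_m$, and within that block the even positions $p=2k$ carry the $\psi$-interleaved letters while the odd positions $p=2k+1$ carry the $\phi$-interleaved ones (each scaled by the factor $2$ and shifted by the $\pm 1$ coming from $T_{2q}$). Because the definition of $\shuffle$ lists the letters of the two substitutions in one order in the image of $1$ and in the opposite order in the image of $-1$, the roles of the even and odd positions are exchanged between the blocks with $a_m=1$ and those with $a_m=-1$; I would record $((2\psi\shuffle\phi+T_{2q})(a))_{2qm+p}$ explicitly in these four cases.

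Next, for each of the four cases I would substitute the computed letter into the argument of the sine. In every case the integer $2j+1+((2\psi\shuffle\phi+T_{2q})(a))_j$ turns out to be even, so one factor $2$ cancels against the denominator $q2^{2n+3}$, lowering it to $q2^{2n+2}$; for the $\phi$-type positions the same rearrangement as in the proof of Lemma 2 lowers the denominator further to $q2^{2n+1}$, at the cost of a half-integer $l_k/2$ in the numerator. This bookkeeping is exactly what, for the $\phi$-type positions, reproduces the sine factors already met for $r_{2n+1}$, and what, for the $\psi$-type positions, produces the new family of factors with denominator $q2^{2n+2}$ recorded in the second line of the statement.

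Finally, on the blocks with $a_m=1$ I would turn each sine into a cosine through $\sin\frac{A\pi}{D}=\cos\frac{(D/2-A)\pi}{D}$ and then apply the involution $(m,k)\mapsto(2^{2n}-1-m,\,q-1-k)$, which preserves the set $\{m:a_m=1\}$ because $a_{2^{2n}-1-m}=a_m$ for the Thue-Morse sequence; on the blocks with $a_m=-1$ the sines are kept unchanged after the same reindexing. Invoking $l_k=l_{q-1-k}$ brings the $\phi$-arguments into the stated $l_{q-1-k}$ form, reproducing line by line the expression for $r_{2n+1}$ given by Lemma 2, and invoking $s_k+s_{q-1-k}=0$ brings the $\psi$-arguments into the stated $s_k$ form. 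The step I expect to be most delicate is precisely this reindexing for the $\psi$-part: since the antisymmetry $s_{q-1-k}=-s_k$ is sign-sensitive (unlike the symmetric relation $l_k=l_{q-1-k}$), one must track carefully how the exchange of even and odd roles between the $a_m=1$ and $a_m=-1$ blocks interacts with the reflection $k\mapsto q-1-k$, and it is here that the hypothesis $s_k+s_{q-1-k}=0$ enters in an essential way.
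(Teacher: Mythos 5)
Your proposal is correct and takes essentially the same route as the paper: its proof likewise starts from the sine product of Lemma 1, records the four blockwise values of $(2\psi\shuffle\phi+T_{2q})(a)$ (even/odd position within a block, $a_j=\pm 1$, with the shuffle exchanging the roles of the two interleaved substitutions on the image of $-1$), cancels the parity factor of $2$ to lower the denominators $q2^{2n+3}\to q2^{2n+2}$ (and to $q2^{2n+1}$ for the $\phi$-type positions), and converts sines to cosines on the $a_j=1$ blocks via the reflection $j\mapsto 2^{2n}-1-j$, $k\mapsto q-1-k$, just as in its Lemma 2. The delicate point you flag — the sign-sensitive reindexing of the $\psi$-part — is precisely where the paper invokes $s_k+s_{q-1-k}=0$ together with $a_j=a_{2^{2n}-1-j}$ in its displayed equalities.
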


\begin{proof}

If $a_j=1$, then $a_{2^{2n}-j-1}=1$, so that for all $k$, $0 \leq k \leq q-1$,  
$$(2\psi\shuffle\phi+T_{2q})(a)_{2(qj+k)}=(2\psi\shuffle\phi+T_{2q})(a)_{2(q2^{2n+1}-qj)+2k}=2s_k+1;$$
$$(2\psi\shuffle\phi+T_{2q})(a)_{2(qj+k)+1}=(2\psi\shuffle\phi+T_{2q})(a)_{2(q2^{2n+1}-2qj)+2k+1}=2l_{k}+1.$$
By similar calculations as in (14) and (15), we have
$$\sin(\pi (\frac{2(2qj+2k)+1}{q2^{2n+3}}+\frac{(2\psi\shuffle\phi+T_{2q})(a)_{qj+k}}{q2^{2n+3}})=\cos(\frac{(2q(2^{2n-1}-1-j)+(2q-1-2k)-s_{k})\pi}{q2^{2n+2}})$$
$$\sin(\pi (\frac{2(2qj+2k+1)+1}{q2^{2n+3}}+\frac{(2\psi\shuffle\phi+T_{2q})(a)_{qj+k}}{q2^{2n+3}})=\cos(\frac{(q(2^{2n-1}-1-j)+q-1-k-l_{k}/2)\pi}{q2^{2n+1}})$$
and 
$$\sin(\pi (\frac{2(2q(2^{2n}-j-1)+2k)+1}{q2^{2n+3}}+\frac{(2\psi\shuffle\phi+T_{2q})(a)_{2q(2^{2n}-j-1)+2k}}{q2^{2n+3}})=\cos(\frac{(2qj+2q-2k-1-s_{k})\pi}{q2^{2n+2}})$$
$$\sin(\pi (\frac{2(2q(2^{2n}-j-1)+2k+1)+1}{q2^{2n+3}}+\frac{(2\psi\shuffle\phi+T_{2q})(a)_{2q(2^{2n}-j-1)+2k}}{q2^{2n+3}})=\cos(\frac{(qj+q-k-1+l_{q-1-k}/2)\pi}{q2^{2n+1}})$$
And if $a_j=-1$, then $a_{2^{2n}-j-1}=-1$,so that for all $k$, $0 \leq k \leq q-1$,  
$$(2\psi\shuffle\phi+T_{2q})(a)_{2(qj+k)}=(2\psi\shuffle\phi+T_{2q})(a)_{2(q2^{2n+1}-qj)+2k}=2l_k-1;$$
$$(2\psi\shuffle\phi+T_{2q})(a)_{2(qj+k)+1}=(2\psi\shuffle\phi+T_{2q})(a)_{2(q2^{2n+1}-2qj)+2k+1}=2s_{k}-1.$$
By a similar calculation as in (16), we have
$$\sin(\pi (\frac{2(2qj+2k)+1}{q2^{2n+3}}+\frac{(2\psi\shuffle\phi+T_{2q})(a)_{2qj+2k}}{q2^{2n+3}})=\sin(\frac{(qj+k-l_k/2)\pi}{q2^{2n+1}})$$
$$\sin(\pi (\frac{2(2qj+2k+1)+1}{q2^{2n+3}}+\frac{(2\psi\shuffle\phi+T_{2q})(a)_{2qj+2k+1}}{q2^{2n+3}})=\sin(\frac{(2qj+2k+1-s_k)\pi}{q2^{2n+2}})$$
Combining all above equalities, we prove the lemma. 
\end{proof}

\begin{theorem}
Let $I1$ and $I2$ be the two infinite products defined as above, with an alternative $q$-substitution $\phi$ and a periodic $q$-substitution $\psi$ such that $\phi(1)=l_0,l_1,...,l_{q-1}$ and $\psi(1)=s_0,s_1,...,s_{q-1}$. If additionally $s_k=-s_{q-1-k}$ and $l_k=l_{q-1-k}$ for all $k$, $0 \leq k \leq q-1$, and $I1\neq 0$ then  
$$I_2/I_1=(-1)^{\sum_{0\leq k \leq q-1}t_k}2^{q/2}\left(\prod_{0 \leq k \leq q-1}\sin(\frac{2k+1-2s_k}{2q}\pi)\right)^{\frac{1}{2}},$$ 
where $t_k$ is the number of positive intergers smaller than $\frac{2s_k-2k-1}{2q}$. Furthermore, the above equality does not depend on the choice of the $q$-substitution $\phi$.
\end{theorem}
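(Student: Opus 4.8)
The plan is to show that $I_2/I_1$ is the \emph{eventually constant} value of the ratio $r'_{2n+2}/r_{2n+1}$, and to evaluate that ratio by an exact finite identity rather than by any limiting estimate. First I would divide the expression for $r'_{2n+2}$ in Lemma 3 by that for $r_{2n+1}$ in Lemma 2. The first line of the $r'_{2n+2}$ product is literally $\sqrt2\,r_{2n+1}$, so all factors built from the $l_k$ cancel and only the second line survives. Writing $N=2^{2n}$ and $\theta_{j,k}=\frac{(2(qj+k)+1-s_k)\pi}{q2^{2n+2}}$, this gives
$$\frac{r'_{2n+2}}{r_{2n+1}}=\prod_{\substack{0\le j\le N-1\\ a_j=1}}\prod_{k=0}^{q-1}2\cos\theta_{j,k}\ \prod_{\substack{0\le j\le N-1\\ a_j=-1}}\prod_{k=0}^{q-1}2\sin\theta_{j,k}.$$
Only the $s_k$ appear, which already establishes the asserted independence of $\phi$; and since $r'_{2n+2}\to I_2$ and $r_{2n+1}\to I_1\neq0$, it remains to evaluate the limit of the right-hand side $Q_n$.

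The decisive step is to remove the Thue--Morse weighting entirely. I would introduce the involution $\sigma(j,k)=(N-1-j,\,q-1-k)$ on $\{0,\dots,N-1\}\times\{0,\dots,q-1\}$. Using $s_{q-1-k}=-s_k$ one verifies $\tfrac{\pi}{2}-\theta_{j,k}=\theta_{\sigma(j,k)}$, hence $2\cos\theta_{j,k}=2\sin\theta_{\sigma(j,k)}$. Since $N=2^{2n}$ is an even power of two, $a_{N-1-j}=a_j$, so $\sigma$ maps the set $\{a_j=1\}$ bijectively onto itself; therefore the product of the cosines over $a_j=1$ equals the product of the sines over $a_j=1$. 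Consequently every factor becomes a sine, the distinction $a_j=\pm1$ disappears, and $Q_n=\prod_{j=0}^{N-1}\prod_{k=0}^{q-1}2\sin\theta_{j,k}=\prod_{k=0}^{q-1}F_N(c_k)$, where $F_N(c)=\prod_{j=0}^{N-1}2\sin\frac{\pi(c+j)}{2N}$ and $c_k=\frac{2k+1-s_k}{2q}$ is the offset of the $k$-th arithmetic progression.

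Next I would evaluate $\prod_k F_N(c_k)$ by an exact identity. From the classical equality $\prod_{j=0}^{2N-1}2\sin\frac{\pi(c+j)}{2N}=2\sin\pi c$, splitting the range at $j=N$ and using $\sin(x+\tfrac\pi2)=\cos x$ together with the reindexing $i\mapsto N-1-i$ gives the reflection relation $F_N(c)\,F_N(1-c)=2\sin\pi c$. The hypotheses $s_{q-1-k}=-s_k$ force $c_{q-1-k}=1-c_k$, so pairing $k$ with $q-1-k$ yields $\big(\prod_{k=0}^{q-1}F_N(c_k)\big)^2=\prod_{k=0}^{q-1}F_N(c_k)F_N(1-c_k)=\prod_{k=0}^{q-1}2\sin\pi c_k$. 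The right-hand side does not depend on $n$; hence $Q_n$ is eventually constant and its limit requires no analytic estimate at all. Taking the nonnegative square root (the paired sines are equal, so the product is $\ge 0$) gives $|I_2/I_1|=2^{q/2}\big(\prod_{k}\sin\pi c_k\big)^{1/2}$, which is the modulus of the claimed value.

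It then remains to pin the sign of $Q_n=\prod_kF_N(c_k)$. Each $F_N(c_k)$ is a product of real sines whose sign is $(-1)^{m_k}$, where $m_k$ is the number of indices $j$ with $\sin\frac{\pi(c_k+j)}{2N}<0$: when $c_k\ge0$ all factors are positive, and when $c_k<0$ the negative factors are exactly those with $c_k+j<0$, a count governed by the integers lying below $-c_k$. Identifying this count with $t_k$ gives $\operatorname{sign}Q_n=(-1)^{\sum_k t_k}$, which completes the evaluation. I expect this last sign bookkeeping to be the main obstacle: one must match the number of sign changes to the stated quantity $t_k$ exactly, taking care of the degenerate progression $c_k=\tfrac12$ occurring in the middle when $q$ is odd, and of any off-by-one arising from the endpoint $j=0$. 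Everything upstream is either an exact cancellation or the single reflection identity, so no convergence analysis beyond $r_{2n+1}\to I_1$ and $r'_{2n+2}\to I_2$ is needed.
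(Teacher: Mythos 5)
Your overall route is in fact the skeleton of the paper's own proof: divide the expression of Lemma 3 by that of Lemma 2 so that the $l_k$-part cancels (whence the independence of $\phi$), remove the Thue--Morse weighting via $a_{2^{2n}-1-j}=a_j$, evaluate the resulting pure sine product through the identity (20), and finish with a sign count. Your two refinements are genuine improvements: the involution $\sigma(j,k)=(2^{2n}-1-j,\,q-1-k)$ with $\theta_{\sigma(j,k)}=\frac{\pi}{2}-\theta_{j,k}$ (using only $s_k+s_{q-1-k}=0$) replaces the paper's error-prone product-to-sum pairings in its long chain of equalities, and the reflection relation $F_N(c)F_N(1-c)=2\sin \pi c$ together with $c_{q-1-k}=1-c_k$ packages the paper's equations (18)--(21) into one clean step, while also showing that the ratio $r'_{2n+2}/r_{2n+1}$ is exactly constant in $n$, a fact the paper never isolates.

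The genuine gap is at the very end: your parenthetical claim that $2^{q/2}\bigl(\prod_k \sin \pi c_k\bigr)^{1/2}$ ``is the modulus of the claimed value'' is false. You have $c_k=\frac{2k+1-s_k}{2q}$, whereas the theorem asserts the factor $\sin(\frac{2k+1-2s_k}{2q}\pi)$; the discrepancy (a factor $2$ on $s_k$, and also the sign of $s_k$) is not removable by the symmetry $s_{q-1-k}=-s_k$, and it propagates into your sign bookkeeping, since $t_k$ is defined through $\frac{2s_k-2k-1}{2q}$ while your count of negative sines is governed by $-c_k=\frac{s_k-2k-1}{2q}$. So as written you prove a different identity from the statement and never reconcile the two. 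The mismatch is real and testable: take $q=2$, $s_0=1=-s_1$, $\phi\equiv 0$; your constant is $2(\sin 0\cdot\sin\pi)^{1/2}=0$, the theorem's is $2(\sin(-\frac{\pi}{4})\sin(\frac{5\pi}{4}))^{1/2}=\sqrt{2}$, while direct computation of the partial products of $I_2/I_1$ gives a value converging to $2$. The source of your sign defect is that you took Lemma 3's second line at face value, and that line carries a typo: the entry $2s_k+1$ at position $2(qj+k)$ produces, after reduction, $2\sin(\pi\frac{2(qj+k)+1+s_k}{q2^{2n+2}})$, i.e.\ $+s_k$, not $-s_k$. Rerunning your argument verbatim with $c_k^{+}=\frac{2k+1+s_k}{2q}$ (your involution still applies, as it only uses $s_k+s_{q-1-k}=0$) yields $(I_2/I_1)^2=2^q\prod_k\sin(\frac{2k+1+s_k}{2q}\pi)$, which in the test case gives $F_N(\frac12)^2=2\sin\frac{\pi}{2}=2$ and matches the numerics; the $2s_k$ in the theorem itself traces to the paper's chain (18), where the displayed half-range pairing is inconsistent (the paired angles there sum to $\frac{\pi}{4}$, not $\frac{\pi}{2}$). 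In short: your method is sound and cleaner than the paper's, but to make it a proof you must correct the sign inherited from Lemma 3 and explicitly flag that the resulting constant contradicts the theorem as stated, rather than asserting agreement with it.
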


\begin{proof}
As we know that $r_n \to I1$ and $r'_n \to I2$ when $n \to \infty$. So $I2/I1=\lim_{n \to \infty}r'_{2n+2}/r_{2n+1}$. From Lemma 1 and Lemma 2, for $n \geq 1$,
\begin{equation}
  \begin{aligned}
r'_{2n+2}/r_{2n+1}&=  \prod_{0 \leq j \leq 2^{2n}-1 \atop a_{j}=1}\prod_{0 \leq k \leq q-1}2\cos (\frac{(2(qj+k)+1-s_{k})\pi}{q2^{2n+2}})\prod_{0 \leq j \leq 2^{2n}-1 \atop a_{j}=-1}\prod_{0 \leq k \leq q-1}2\sin (\frac{(2(qj+k)+1-s_k)\pi}{q2^{2n+2}})\\
&= \prod_{0 \leq j \leq 2^{2n-1}-1 \atop a_{j}=1}\prod_{0 \leq k \leq q-1}2\cos (\frac{(2(qj+k)+1-s_{k})\pi}{q2^{2n+2}})2\cos (\frac{(2(q(2^{2n-1}-1-j)+2(q-1-k)+1+s_{k})\pi}{q2^{2n+2}})\\
&\times\prod_{0 \leq j \leq 2^{2n-1}-1 \atop a_{j}=1}\prod_{0 \leq k \leq q-1}2\cos (\frac{(2(qj+k)+1-s_{k})\pi}{q2^{2n+2}})2\sin (\frac{(2(q(2^{2n-1}-1-j)+2(q-1-k)+1+s_{k})\pi}{q2^{2n+2}})\\ 
&= \prod_{0 \leq j \leq 2^{2n-1}-1 \atop a_{j}=1}\prod_{0 \leq k \leq q-1}2(\cos (\frac{(2(qj+k)+1-2s_{k})\pi}{q2^{2n+1}}-\frac{\pi}{2})+\cos(\frac{\pi}{2}))\\
&\times\prod_{0 \leq j \leq 2^{2n-1}-1 \atop a_{j}=-1}\prod_{0 \leq k \leq q-1}2(\cos (\frac{(2(qj+k)+1-2s_k)\pi}{q2^{2n+1}}-\frac{\pi}{2})-\cos(\frac{\pi}{2}))\\
&= \prod_{0 \leq j \leq 2^{2n-1}-1}\prod_{0 \leq k \leq q-1}2\sin (\frac{(2(qj+k)+1-2s_{k})\pi}{q2^{2n+1}})
    \end{aligned}
\end{equation}
The second equality is from the hypothesis that $s_k=s_{q-1-k}$ and $a_j=1$ if and only if $a_{2^n-1-j}=1$.
The sign of the number $r'_{2n+2}/r_{2n+1}$ is $(-1)^{\sum_{0\leq k\leq q-1}t^{(n)}_k}$, where $t^{(n)}_k$ is number of integers smaller than $\min\left\{2^{2n-1}-1, \frac{2s_k-2k-1}{2q}\right\}$.
Now using once more the hypothesis that $s_k=s_{q-1-k}$,
\begin{equation}
  \begin{aligned}
(r'_{2n+2}/r_{2n+1})^2&= \prod_{0 \leq j \leq 2^{2n-1}-1}\prod_{0 \leq k \leq q-1}4\sin^2 (\frac{(2(qj+k)+1-2s_{k})\pi}{q2^{2n+1}})\\
&= \prod_{0 \leq j \leq 2^{2n-1}-1}\prod_{0 \leq k \leq q-1}2\sin (\frac{(2(qj+k)+1-2s_{k})\pi}{q2^{2n+1}})2\sin (\frac{(2(qj+(q-1-k))+1+2s_{k})\pi}{q2^{2n+1}})\\
&=\prod_{0 \leq j \leq 2^{2n-1}-1}\prod_{0 \leq k \leq q-1}2\sin (\frac{(2(qj+k)+1-2s_{k})\pi}{q2^{2n+1}})\\
&\times\prod_{1 \leq j \leq 2^{2n-1}}\prod_{0 \leq k \leq q-1}2\sin (\frac{(2(qj-k)-1+2s_{k})\pi}{q2^{2n+1}})\\
&=\prod_{1 \leq j \leq 2^{2n-1}-1}\prod_{0 \leq k \leq q-1}2\sin (\frac{(2qj+2k+1-2s_k)\pi}{q2^{2n+1}})2\sin (\frac{(2q(2^{2n-1}-j)-2k-1+2s_k)\pi}{q2^{2n+1}})\\
&\times\prod_{0 \leq k \leq q-1}2\sin (\frac{(2k+1-2s_{k})\pi}{q2^{2n+1}})2\sin (\frac{(-2k-1+2s_{k})\pi}{q2^{2n+1}}+\frac{\pi}{2})\\
&=\prod_{1 \leq j \leq 2^{2n-1}-1}\prod_{0 \leq k \leq q-1}2(\sin (\frac{(2qj+2k+1-2s_k)\pi}{q2^{2n}}))\times\prod_{0 \leq k \leq q-1}2\sin (\frac{(2k+1-2s_{k})\pi}{q2^{2n}})\\
&=\prod_{1 \leq j \leq 2^{2n-1}}\prod_{0 \leq k \leq q-1}2(\sin (\frac{(2qj+2k+1-2s_k)\pi}{q2^{2n}}))\\
    \end{aligned}
\end{equation}
Now use the equality that for all real number $x$ and integer $n$, 
\begin{equation}\sin(nx)=\frac{1}{2}\prod_{j=0}^{n-1}2\sin(x+\frac{j\pi}{n})\end{equation}
( for the proof, see for example \cite{22350}), we have 
\begin{equation}
  \begin{aligned}
(r'_{2n+2}/r_{2n+1})^2&=\prod_{1 \leq j \leq 2^{2n-1}}\prod_{0 \leq k \leq q-1}2(\sin (\frac{(2qj+2k+1-2s_k)\pi}{q2^{2n}}))\\
&=2^q\prod_{0 \leq k \leq q-1}\sin(\frac{2k+1-2s_k}{2q}\pi)
    \end{aligned}
\end{equation}
The last product in the above equality is a positive constant because of the fact that
\begin{equation}
  \begin{aligned}
\sin(\frac{2k+1-2s_k}{2q}\pi)\sin(\frac{2(2q-1-k)+1-2s_{q-1-k}}{2q}\pi)&=\sin(\frac{2k+1-2s_k}{2q}\pi)\sin(\pi-\frac{2k-1-2s_{k}}{2q}\pi)\\
&=\sin^2(\frac{2k+1-2s_k}{2q}\pi)
    \end{aligned}
\end{equation}
To conclude, $$I_2/I_1=\lim_{n\to \infty}r'_{2n+2}/r_{2n+1}=(-1)^{\sum_{0\leq k \leq q-1}t_k}2^{q/2}\left(\prod_{0 \leq k \leq q-1}\sin(\frac{2k+1-2s_k}{2q}\pi)\right)^{\frac{1}{2}}.$$
\end{proof}

\begin{remark}
In the proof of Theorem 1, we do not use the hypothesis that $I1 \neq 0$. In fact, from Proposition 1, $I1=0$ if and only if there exists  a $k$ such that $1+\frac{(\phi+T_q)(a)_k}{2k+1}=0$. However, in this case, we can check easily that $1+\frac{(2\psi\shuffle\phi+T_{2q})(a)_{2k}}{4k+1}=0$. So when calculating $r'_{2n+2}/r_{2n+1}$, the zeros in $r_{2n+1}$ ``cancel" with some zeros in $r'_{2n+2}$, which makes the calculation still ``work" in the case that $I1=0$.
\end{remark}

\begin{corollary}
For any positive integer $q$,
$$\frac{\prod_{n=0}^{\infty}\left(1+\frac{T_{2q}(a)_n}{2n+1}\right)}{\prod_{n=0}^{\infty}\left(1+\frac{T_q(a)_n}{2n+1}\right)}=\sqrt{2}$$
\end{corollary}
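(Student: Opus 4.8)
The plan is to obtain this identity as the special case of Theorem 1 in which both substitutions are taken to be trivial. Concretely, I would choose the alternative $q$-substitution $\phi$ with $l_0=l_1=\cdots=l_{q-1}=0$ and the periodic $q$-substitution $\psi$ with $s_0=s_1=\cdots=s_{q-1}=0$, so that $\phi$ sends both $1$ and $-1$ to the all-zero word of length $q$, and likewise for $\psi$. With this choice $(\phi+T_q)(a)_n=T_q(a)_n$, and since $\phi$ and $\psi$ vanish, the interleaved substitution $2\psi\shuffle\phi$ is the all-zero $2q$-substitution, whence $(2\psi\shuffle\phi+T_{2q})(a)_n=T_{2q}(a)_n$. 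Therefore $I1=\prod_{n\geq0}\left(1+\frac{T_q(a)_n}{2n+1}\right)$ and $I2=\prod_{n\geq0}\left(1+\frac{T_{2q}(a)_n}{2n+1}\right)$ are exactly the denominator and numerator appearing in the statement, so the corollary asks precisely for the value of $I2/I1$.

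Next I would verify that Theorem 1 applies to this choice. The conditions $l_k=l_{q-1-k}$ and $s_k=-s_{q-1-k}$ are trivially satisfied because all the $l_k$ and $s_k$ vanish, and Propositions 1 and 2 guarantee that $I1$ and $I2$ are well defined. Should the nonvanishing hypothesis $I1\neq0$ fail, the Remark following Theorem 1 shows that the evaluation of $I2/I1$ as $\lim_{n}r'_{2n+2}/r_{2n+1}$ remains valid, so this point causes no obstruction.

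It then remains to evaluate the closed form provided by Theorem 1, namely $I2/I1=(-1)^{\sum_{k}t_k}\,2^{q/2}\bigl(\prod_{k=0}^{q-1}\sin(\tfrac{2k+1}{2q}\pi)\bigr)^{1/2}$. For the product of sines I would apply the identity $\sin(nx)=\tfrac12\prod_{j=0}^{n-1}2\sin(x+\tfrac{j\pi}{n})$ already used in the proof of Theorem 1, with $n=q$ and $x=\tfrac{\pi}{2q}$: since $\sin\bigl(q\cdot\tfrac{\pi}{2q}\bigr)=\sin(\tfrac{\pi}{2})=1$, this yields $\prod_{k=0}^{q-1}\sin(\tfrac{2k+1}{2q}\pi)=2^{-(q-1)}$, so that $2^{q/2}\cdot 2^{-(q-1)/2}=\sqrt2$. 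For the sign, each $t_k$ counts the positive integers smaller than $\tfrac{2s_k-2k-1}{2q}=\tfrac{-2k-1}{2q}<0$, of which there are none; hence $\sum_k t_k=0$ and the global sign is $+1$. Combining these gives $I2/I1=\sqrt2$, as claimed.

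The argument is essentially routine once the correct specialization is identified, so the only point genuinely requiring care is the evaluation of the sine product together with the verification that the sign is $+1$; both follow directly from the identity already invoked in the proof of Theorem 1 and from the vanishing of the $s_k$.
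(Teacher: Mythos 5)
Your proof is correct and takes essentially the same route as the paper: the identical all-zero specialization of $\phi$ and $\psi$ in Theorem 1, followed by the sine-product identity $\sin(nx)=\tfrac12\prod_{j=0}^{n-1}2\sin\bigl(x+\tfrac{j\pi}{n}\bigr)$ with $n=q$, $x=\tfrac{\pi}{2q}$, giving $2^{q/2}\bigl(\prod_{k=0}^{q-1}\sin(\tfrac{2k+1}{2q}\pi)\bigr)^{1/2}=\sqrt{2}$. Your explicit verification that each $t_k=0$ (so the sign factor is $+1$) and your note that the Remark covers the case $I1=0$ are details the paper leaves implicit, and they only make the same argument more complete.
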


\begin{proof}
Let $\phi$ and $\psi$ be 2 $q$-substitution such that $\phi(1)=\phi(-1)=\psi(1)=\psi(-1)=\smash{\underbrace{0,0,...,0}_{q \; \text{times}}}$, then they satisfy the condition in Theorem 1, and  $$I1=\prod_{n=0}^{\infty}\left(1+\frac{(\phi+T_q)(a)_n}{2n+1}\right)=\prod_{n=0}^{\infty}\left(1+\frac{T_q(a)_n}{2n+1}\right),$$
$$I2=\prod_{n=0}^{\infty}\left(1+\frac{(2\psi\shuffle\phi+T_{2q})(a)_n}{2n+1}\right)=\prod_{n=0}^{\infty}\left(1+\frac{T_{2q}(a)_n}{2n+1}\right).$$
So that $\frac{\prod_{n=0}^{\infty}\left(1+\frac{T_{2q}(a)_n}{2n+1}\right)}{\prod_{n=0}^{\infty}\left(1+\frac{T_q(a)_n}{2n+1}\right)}=2^{q/2}\prod_{0\leq k\leq q-1}\sin(\frac{2k+1}{2q}\pi)^{\frac{1}{2}}=(2\sin(\frac{\pi}{2}))^{1/2}$. The last equality is from the equality (20).

\end{proof}

\begin{corollary}
For given integers $q$ and $r$, such that $0 \leq r \leq q-1$, and for any real number $s$,
\begin{equation}
  \begin{aligned}
\prod_{n \geq 0 }\left(\frac{\left(qn+r+s+1\right)\left(q(n+1)-r-s+1\right)}{\left(qn+r+1\right)\left(q(n+1)-r+1\right)}\right)^{\frac{a_n+1}{2}}&\prod_{ n \geq 0 }\left(\frac{\left(qn+r+s-\frac{1}{2}\right)\left(q(n+1)-r-s-\frac{1}{2}\right)}{\left(qn+r-\frac{1}{2}\right)\left(q(n+1)-r-\frac{1}{2}\right)}\right)^{\frac{1-a_n}{2}}\\
&=(-1)^{t(s)}\frac{|\sin(\frac{2r+1-2s}{2q}\pi)|}{\sin(\frac{2r+1}{2q}\pi)}
    \end{aligned}
\end{equation}
where $t(s)$ is the number of positive intergers smaller than $\frac{2s-2r-1}{2q}$ if $s$ is positive. Otherwise,  $t(s)$ is the number of positive intergers smaller than $\frac{-2s-2(q-r)-1}{2q}$. Consequencely, the product of infinite products as above is $2q$-periodic on function of $s$.
\end{corollary}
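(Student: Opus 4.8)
The plan is to read the displayed product as a specialization of the quotient $I_2/I_1$ of Theorem 1. Concretely, I would take the periodic $q$-substitution $\psi$ supported on the two indices $r$ and $q-1-r$, with $s_r=s$ and $s_{q-1-r}=-s$; the antisymmetry $s_k=-s_{q-1-k}$ is then automatic and forces $s_k=0$ for all other $k$. The alternative substitution $\phi$ I would leave free, since Theorem 1 asserts that the value of $I_2/I_1$ is independent of $\phi$. With this $\psi$ the product $\prod_{0\le k\le q-1}\sin\!\big(\tfrac{2k+1-2s_k}{2q}\pi\big)$ simplifies drastically: only the factors at $k=r$ and $k=q-1-r$ are affected, and by $\sin(\pi-x)=\sin x$ the factor at $k=q-1-r$ equals the one at $k=r$, so the two together give $\sin^2\!\big(\tfrac{2r+1-2s}{2q}\pi\big)$, while all remaining factors contribute the constant $\prod_{k\notin\{r,q-1-r\}}\sin(\tfrac{2k+1}{2q}\pi)$.

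Rather than track the constant and the factor $2^{q/2}$ directly, I would divide by the same expression evaluated at $\psi\equiv 0$, which is exactly the content of Corollary 1 (the all-zero case of Theorem 1 gives $2^{q/2}\prod_k\sin(\tfrac{2k+1}{2q}\pi)^{1/2}=\sqrt 2$, equivalently $\prod_k\sin(\tfrac{2k+1}{2q}\pi)=2^{1-q}$). Forming this ratio cancels $2^{q/2}$ together with every unaffected sine, and using $\sin(\tfrac{2(q-1-r)+1}{2q}\pi)=\sin(\tfrac{2r+1}{2q}\pi)$ leaves precisely $(-1)^{\sum_k t_k}\,|\sin(\tfrac{2r+1-2s}{2q}\pi)|/\sin(\tfrac{2r+1}{2q}\pi)$, the claimed right-hand side; in particular the spurious $\sqrt 2$ appearing in the raw value of $I_2/I_1$ is exactly the normalizing factor supplied by Corollary 1.

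The main obstacle is to match the displayed double product, in its precise form, with this normalized quotient. Here the delicate point is the scale: the substitution $2\psi\shuffle\phi+T_{2q}$ lives naturally on blocks of length $2q$, whereas the statement is written on blocks of length $q$ (the factors $qn+r$ and $q(n+1)-r$). I would bridge this using the Thue–Morse self-similarity $a_{2n}=a_n$, $a_{2n+1}=-a_n$, which folds the $2q$-block structure of $I_2$ onto the $q$-block structure of the statement. Expanding the definitions of $\phi+T_q$, $2\psi\shuffle\phi+T_{2q}$, $T_q$ and $T_{2q}$, one sees that all factors are $1$ except at the residues $r$ and $q-1-r$; when $a_n=1$ the $\psi$-data sits at the even positions of the $2q$-block and produces the integer-shifted factors weighted by $\tfrac{1+a_n}{2}$, while when $a_n=-1$ it sits at the odd positions and produces the half-integer ($-\tfrac12$)-shifted factors weighted by $\tfrac{1-a_n}{2}$, and the reflection $k\mapsto q-1-k$ turns the single shift $s$ into the two symmetric blocks $qn+r$ and $q(n+1)-r$. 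Convergence for arbitrary real $s$ is not a problem: pairing numerator against denominator yields the same $O(1/n^2)$ estimate as in Propositions 1 and 2, so the product converges absolutely and all rearrangements are legitimate.

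Finally I would settle the sign and the periodicity. Because $s_k=0$ for $k\notin\{r,q-1-r\}$, the exponent $\sum_k t_k$ collapses to $t_r+t_{q-1-r}$, and a short case analysis shows that for $s>0$ only $t_r$ can be nonzero while for $s\le 0$ only $t_{q-1-r}$ can be — which is exactly the two-case description of $t(s)$; this is the one genuinely fiddly point, amounting to an off-by-one check on the count of positive integers below the relevant bound. The $2q$-periodicity is then immediate from the right-hand side: under $s\mapsto s+2q$ the quantity $|\sin(\tfrac{2r+1-2s}{2q}\pi)|$ is unchanged and the bound defining $t(s)$ grows by $2$, so $(-1)^{t(s)}$ is unchanged, whereas the half-shift $s\mapsto s+q$ preserves $|\sin|$ but flips the parity, confirming that $2q$ is the exact period.
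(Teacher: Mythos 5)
Your proposal follows essentially the same route as the paper's own proof: the paper likewise specializes Theorem 1 to the periodic substitution $\psi$ supported at the indices $r$ and $q-1-r$ (with values $s$ and $-s$), cancels the factor $2^{q/2}\prod_{k}\sin(\frac{2k+1}{2q}\pi)^{1/2}=\sqrt{2}$ by splitting off the pure $T_{2q}/T_q$ quotient of Corollary 1, and then identifies the residual ratio product $\prod_{\psi\shuffle\phi(a)_n\neq 0}$ with the displayed double product via exactly the finite-block expansion you sketch, the only cosmetic difference being that the paper fixes $\phi\equiv 0$ outright instead of invoking the $\phi$-independence of Theorem 1. Your sign bookkeeping (collapsing $\sum_k t_k$ to $t_r+t_{q-1-r}$) and your verification that $2q$ is the exact period are correct and in fact more explicit than the paper's, which asserts both without the case analysis.
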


\begin{proof}
Firstly let $\phi$ and $\psi$ be 2 $q$-substitution such that $\phi(1)=\phi(-1)=0,0,...,0$, and $\psi(1)=\psi(-1)=0,0,...,0,s,0,...,0,-s,0,...,0$ such that the $r$th element in the string $\psi(1)$ is $s$ and the $(q+1-r)$th element is $-s$, then they satisfy the condition in Theorem 1, and  $$I1=\prod_{n=0}^{\infty}\left(1+\frac{(\phi+T_q)(a)_n}{2n+1}\right)=\prod_{n=0}^{\infty}\left(1+\frac{T_q(a)_n}{2n+1}\right),$$
$$I2=\prod_{n=0}^{\infty}\left(1+\frac{(2\psi\shuffle\phi+T_{2q})(a)_n}{2n+1}\right)=\prod_{n=0}^{\infty}\left(1+\frac{T_{2q}(a)_n}{2n+1}\right)\prod_{\substack{n\geq 0 \\ \psi\shuffle\phi(a)_n \neq 0} }\frac{\left(1+\frac{2\psi\shuffle\phi(a)_n+T_{2q}(a)_n}{2n+1}\right)}{\left(1+\frac{T_{2q}(a)_n}{2n+1}\right)}.$$
So that $$I2/I1=(-1)^{t(s)}2^{q/2}\prod_{0\leq k\leq q-1}\sin(\frac{2k+1}{2q}\pi)^{\frac{1}{2}}(\frac{\sin(\frac{2r+1-2s}{2q}\pi)\sin(\pi-\frac{2r+1-2s}{2q}\pi)}{\sin(\frac{2r+1}{2q}\pi)\sin(\pi-\frac{2r+1}{2q}\pi)})^{\frac{1}{2}}=(-1)^{t(s)}2^{1/2}(\frac{|\sin(\frac{2r+1-2s}{2q}\pi)|}{\sin(\frac{2r+1}{2q}\pi)}).$$ 
Furthermore, $$\prod_{\substack{n\geq 0 \\ \psi\shuffle\phi(a)_n \neq 0} }\frac{\left(1+\frac{2\psi\shuffle\phi(a)_n+T_{2q}(a)_n}{2n+1}\right)}{\left(1+\frac{T_{2q}(a)_n}{2n+1}\right)}=(-1)^{t(s)}\frac{|\sin(\frac{2r+1-2s}{2q}\pi)|}{\sin(\frac{2r+1}{2q}\pi)}.$$
Secondly, for given positive integer $N$,
\begin{equation}
  \begin{aligned}
\prod_{\substack{0 \leq n \leq 4qN \\ \psi\shuffle\phi(a)_n \neq 0}}\frac{\left(1+\frac{2\psi\shuffle\phi(a)_n+T_{2q}(a)_n}{2n+1}\right)}{\left(1+\frac{T_{2q}(a)_n}{2n+1}\right)}&=\prod_{\substack{0 \leq n \leq 2N \\ a_n=1} }\frac{\left(1+\frac{2s+1}{2qn+2r+1}\right)\left(1+\frac{-2s+1}{2qn+2(q-r)+1}\right)}{\left(1+\frac{1}{2qn+2r+1}\right)\left(1+\frac{1}{2qn+2(q-r)+1}\right)}\\
&\times\prod_{\substack{0 \leq n \leq 2N \\ a_n=-1} }\frac{\left(1+\frac{2s-1}{2qn+(2r-1)+1}\right)\left(1+\frac{-2s-1}{2qn+(2(q-r)-1)+1}\right)}{\left(1-\frac{1}{2qn+(2r-1)+1}\right)\left(1-\frac{1}{2qn+(2(q-r)-1)+1}\right)}\\
&=\prod_{\substack{0 \leq n \leq 2N \\ a_n=1} }\frac{\left(qn+r+s+1\right)\left(q(n+1)-r-s+1\right)}{\left(qn+r+1\right)\left(q(n+1)-r+1\right)}\\
&\times\prod_{\substack{0 \leq n \leq 2N \\ a_n=-1} }\frac{\left(qn+r+s-\frac{1}{2}\right)\left(q(n+1)-r-s-\frac{1}{2}\right)}{\left(qn+r-\frac{1}{2}\right)\left(q(n+1)-r-\frac{1}{2}\right)}\\
&=\prod_{0 \leq n \leq 2N }\left(\frac{\left(qn+r+s+1\right)\left(q(n+1)-r-s+1\right)}{\left(qn+r+1\right)\left(q(n+1)-r+1\right)}\right)^{\frac{a_n+1}{2}}\\
&\times\prod_{0 \leq n \leq 2N }\left(\frac{\left(qn+r+s-\frac{1}{2}\right)\left(q(n+1)-r-s-\frac{1}{2}\right)}{\left(qn+r-\frac{1}{2}\right)\left(q(n+1)-r-\frac{1}{2}\right)}\right)^{\frac{1-a_n}{2}}\\
    \end{aligned}
\end{equation}
We can check easily that the last two finite products both converge when $N$ tends to infinite, so we conclude the proof.
\end{proof}

\begin{corollary}
For given integers $q$ and $r$, such that $0 \leq r \leq q-1$, for any integer $i$
\begin{equation}
  \begin{aligned}
&\prod_{n \geq 0 }\left(\frac{\left(qn+r+\frac{3}{2}\frac{2r+1}{2q}+2qi+1\right)\left(q(n+1)-r-\frac{3}{2}\frac{2r+1}{2q}+2qi+1\right)}{\left(qn+r+1\right)\left(q(n+1)-r+1\right)}\right)^{\frac{a_n+1}{2}}\\
&\times\prod_{ n \geq 0 }\left(\frac{\left(qn+r+\frac{3}{2}\frac{2r+1}{2q}+2qi-\frac{1}{2}\right)\left(q(n+1)-r-\frac{3}{2}\frac{2r+1}{2q}+2qi-\frac{1}{2}\right)}{\left(qn+r-\frac{1}{2}\right)\left(q(n+1)-r-\frac{1}{2}\right)}\right)^{\frac{1-a_n}{2}}\\
&=2cos(\frac{2r+1}{2q}\pi)
    \end{aligned}
\end{equation}
\end{corollary}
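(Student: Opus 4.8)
The plan is to read Corollary 3 off Corollary 2 by specializing the free parameter $s$ to one distinguished value and then invoking the $2q$-periodicity in $s$ recorded there. Concretely, I would take $s=\frac{3}{2}(2r+1)$, the value for which $\frac{2r+1-2s}{2q}$ becomes $\pm\frac{2r+1}{q}$. With this choice the factors $qn+r+s+1$, $q(n+1)-r-s+1$, $qn+r+s-\frac12$ and $q(n+1)-r-s-\frac12$ appearing in Corollary 2 are exactly the rational functions forming the product of Corollary 3 in the case $i=0$; the general integer $i$ is then supplied for free, because Corollary 2 asserts that the product of infinite products is $2q$-periodic as a function of $s$, so replacing $s$ by $s+2qi$ leaves the left-hand side unchanged while producing a genuinely different-looking couple of rational functions. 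This periodicity is exactly the mechanism that yields infinitely many couples $(R,S)$ with the same value, as advertised in the abstract.

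It then remains to evaluate the right-hand side of Corollary 2 at $s=\frac32(2r+1)$. First I would compute $\frac{2r+1-2s}{2q}=\frac{(2r+1)-3(2r+1)}{2q}=-\frac{2r+1}{q}$, so that $|\sin(\frac{2r+1-2s}{2q}\pi)|=|\sin(\frac{2r+1}{q}\pi)|$ by oddness of the sine. Substituting the double-angle identity $\sin(\frac{2r+1}{q}\pi)=2\sin(\frac{2r+1}{2q}\pi)\cos(\frac{2r+1}{2q}\pi)$ into the quotient of Corollary 2, and using that $\sin(\frac{2r+1}{2q}\pi)>0$ because $0<\frac{2r+1}{2q}<1$ for $0\le r\le q-1$, the right-hand side collapses to $(-1)^{t(s)}\,2\,|\cos(\frac{2r+1}{2q}\pi)|$. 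Up to the sign $(-1)^{t(s)}$ this is already the desired value $2\cos(\frac{2r+1}{2q}\pi)$.

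The one genuinely delicate step, and the place where I expect all the care to be needed, is matching that sign. Since $s=\frac32(2r+1)>0$, the definition in Corollary 2 makes $t(s)$ the number of positive integers strictly below $\frac{2s-2r-1}{2q}=\frac{2r+1}{q}$, a quantity lying in $(0,2)$; hence $t(s)=0$ when $2r+1<q$ and $t(s)=1$ when $2r+1>q$, the borderline $2r+1=q$ being the degenerate case in which both sides vanish since $2\cos(\frac{\pi}{2})=0$. On the other hand $\cos(\frac{2r+1}{2q}\pi)$ is positive exactly when $\frac{2r+1}{2q}<\frac12$, i.e. $2r+1<q$, and negative exactly when $2r+1>q$. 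Thus $(-1)^{t(s)}$ agrees with the sign of $\cos(\frac{2r+1}{2q}\pi)$ in every case, so $(-1)^{t(s)}\,2\,|\cos(\frac{2r+1}{2q}\pi)|=2\cos(\frac{2r+1}{2q}\pi)$, which is the claimed identity. Everything outside this sign bookkeeping is a one-line substitution together with a single application of the double-angle formula.
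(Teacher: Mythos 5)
Your proposal is correct and follows essentially the same route as the paper: specialize the parameter $s$ in Corollary 2, evaluate the right-hand side via the double-angle identity $\sin(\tfrac{2r+1}{q}\pi)=2\sin(\tfrac{2r+1}{2q}\pi)\cos(\tfrac{2r+1}{2q}\pi)$, and handle general $i$ by the $2q$-periodicity in $s$. Two remarks are worth making. First, you take $s=\tfrac{3}{2}(2r+1)$, whereas the printed statement (and the paper's proof) write $s=\tfrac{3}{2}\tfrac{2r+1}{2q}+2qi$; but the paper's own computation $\tfrac{2r+1-2s}{2q}=\tfrac{-2(2r+1)}{2q}$ is valid only for your value of $s$, so you have implicitly corrected what is evidently a typo in the statement. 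Second, your sign bookkeeping is genuinely more careful than the paper's: the paper asserts that the two infinite products are positive and writes $|\sin(\tfrac{-2(2r+1)}{2q}\pi)|=2\sin(\tfrac{2r+1}{2q}\pi)\cos(\tfrac{2r+1}{2q}\pi)$, both of which fail when $2r+1>q$ (then $\cos(\tfrac{2r+1}{2q}\pi)<0$ and the left-hand side is in fact negative --- try $q=3$, $r=2$, where three factors with nonzero exponent are negative). Your observation that $\tfrac{2s-2r-1}{2q}=\tfrac{2r+1}{q}\in(0,2)$ forces $t(s)=0$ for $2r+1<q$ and $t(s)=1$ for $2r+1>q$, so that $(-1)^{t(s)}$ coincides with the sign of $\cos(\tfrac{2r+1}{2q}\pi)$ (with the borderline case $2r+1=q$ degenerate, both sides vanishing), is precisely the step the paper glosses over, and it closes the argument correctly.
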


\begin{proof}
By taking $s=\frac{3}{2}\frac{2r+1}{2q}+2qi$ and considering a particular case that $i=0$ in Corollary 2, we check it easily that the two infinite products in (25) are positive. Furthermore, we have
$$\frac{|sin(\frac{2r+1-2s}{2q}\pi)|}{sin(\frac{2r+1}{2q}\pi)}=\frac{|sin(\frac{-2(2r+1)}{2q}\pi)|}{sin(\frac{2r+1}{2q}\pi)}=\frac{2sin(\frac{2r+1}{2q}\pi)cos(\frac{2r+1}{2q}\pi)}{sin(\frac{2r+1}{2q}\pi)}=2cos(\frac{2r+1}{2q}\pi).$$
We prove the corollary for $i=0$. Using the periodicity on $s$, we conclude the proof in the general case.
\end{proof}

\bibliographystyle{alpha}
\bibliography{citations_V4}

\end{document}